\newcommand{\icc}{\textsc{icc}s}
\newcommand{\Res}{\operatorname{Res}}
\newcommand{\KS}{Ku\-ra\-mo\-to--Siva\-shin\-sky}
\newcommand{\bbE}{\mathbb E}
\newcommand{\uij}{U_{i,j}}
\newcommand{\dotuij}{\dot U_{i,j}}
\newcommand{\cL}{\mathcal L}
\newcommand{\shift}{\text{\small$\mathcal E$}}
\newcommand{\glpde}{Ginzburg--Landau \pde~\eqref{E_gl2d}}
\newcommand{\bdelta}{\text{\boldmath$\delta$}}
\newcommand{\pat}{\hat} 
\title{Accurate macroscale modelling of spatial dynamics in multiple dimensions}
\author{
A.~J. Roberts\thanks{Corresponding author: School of Mathematical Sciences, University of Adelaide, South Australia~5005, Australia. 
\protect\url{mailto:anthony.roberts@adelaide.edu.au}}
\and
Tony MacKenzie\thanks{Department of Mathematics and Computing, University of Southern Queensland, Toowoomba, Queensland~4352, Australia.} 
\and 
J.~E. Bunder\thanks{School of Mathematical Sciences, University of Adelaide, South Australia~5005, Australia. 
\protect\url{mailto:judith.bunder@adelaide.edu.au}}
}
\begin{document}

\maketitle

\begin{abstract}
Developments in dynamical systems theory provides new support for the macroscale modelling of \pde{}s and other microscale systems such as Lattice Boltzmann, Monte Carlo or Molecular Dynamics simulators.  By systematically resolving subgrid microscale dynamics the dynamical systems approach constructs accurate closures of macroscale discretisations of the microscale system.  Here we specifically explore reaction-diffusion problems in two spatial dimensions as a prototype of generic systems in multiple dimensions.  Our approach unifies into one the modelling of systems by a type of finite elements, and the `equation free' macroscale modelling of microscale simulators efficiently executing only on small patches of the spatial domain. Centre manifold theory ensures that a closed model exist on the macroscale grid, is emergent, and is systematically approximated.  Dividing space either into overlapping finite elements or into spatially separated small patches, the specially crafted inter-element\slash patch coupling also ensures that the constructed discretisations are consistent with the microscale system\slash\pde\ to as high an order as desired.    Computer algebra handles the considerable algebraic details as seen in the specific application to the Ginzburg--Landau \pde.  However, higher order models in multiple dimensions require a mixed numerical and algebraic approach that is also developed.  The modelling here may be straightforwardly adapted to a wide class of reaction-diffusion \pde{}s and lattice equations in multiple space dimensions.  When applied to patches of microscopic simulations our coupling conditions promise efficient macroscale simulation.
\end{abstract}

\paragraph{Keywords} 
multiscale computation,
discrete modelling closure,  
multiple dimensions,  
gap tooth method,
centre manifolds,
reaction-diffusion equations,
computer algebra.
 
\paragraph{AMS subject classifications}
35K57, 37Mxx

\tableofcontents

\section{Introduction}

Computational simulation is a key enabling technology in engineering, science and other quantitative fields \cite[e.g.]{Dolbow04}.
Coherent spatio-temporal dynamics is the preeminent example of complex system behaviour as it emerges from the interactions of many similar components at each locale in space~\cite[e.g.]{Greenside96, Louzoun01, Hyman86a, Greenside84}.
We must simulate such systems on the scale of interest and operation.
But systems that depend on physical processes over multiple scales pose notorious difficulties.
These multiscale difficulties are major obstacles to progress in fields as diverse as environmental and geosciences, climate, materials, combustion, high energy density physics, fusion, bioscience, chemistry, power grids and information networks~\cite{Dolbow04}.
Following the `equation free' approach of Kevrekidis, Samaey and colleagues~\cite[e.g.]{Kevrekidis09a}, we here address the extraction, using dynamical systems theory, of computationally efficient macroscale models from given microscopic models, whether \pde\ or lattice dynamics or other microscale simulators. 

The ultimate aim of this article is to provide theoretical support for and to further develop Kevrekidis' et al.~\cite{Kevrekidis03b, Kevrekidis09a} `equation free' approach to multiscale modelling.
Given a numerical simulator for physical components at much smaller scales than the scale of primary interest, the `equation free' aim is to bridge space and time scales to simulations resolving the macroscale of primary interest.
Here we bridge space scales by generalising  to multiple dimensions (specifically~2D) both the methodology and supporting theory for the `equation free', gap-tooth method for microsimulators that was initiated by Gear, Li \& Kevrekidis~\cite{Gear03} and Samaey, Kevrekidis \& Roose~\cite{Samaey03a, Samaey03b}.
Figures~\ref{fig:Matlab/early5-4-8-12-10D/0}--\ref{fig:Matlab/early5-4-8-12-10D/1} show snapshots of an example simulation of the gap-tooth method in~2D:  microscale simulators executing on a fine grid \emph{only within} the $64$~patches (of an $8\times 8$ macroscale grid) are coupled across empty space to efficiently simulate the dynamics over large spatial scales; the computational cost here is one sixth that of a microscale simulation over the whole domain, but much better gains may be obtained (but do not provide suitable graphics).
Crucially, although our analysis considers systems in principle representable in the class of the reaction-diffusion \pde~\eqref{Erde}, in application the gap-tooth method does not require knowledge of the specific \pde: the gap-tooth method provides \emph{on the fly closure}.  Such closure constitutes critical components of, for example, mathematical homogenization~\cite[e.g.]{Samaey03b, Gustafsson03, Balakotaiah03}, renormalization group techniques~\cite[e.g.]{Ei99, Mudavanhu03, Chorin05}, and multiscale finite elements~\cite[e.g.]{Hou97, Chen02}.  But by avoiding the need to algebraically find the closure the gap-tooth scheme has the potential to efficiently simulate many systems whose macroscale dynamics are otherwise unaccessible.

\newcommand{\patchfigure}[4]{
\begin{figure}
\centering
\includegraphics{#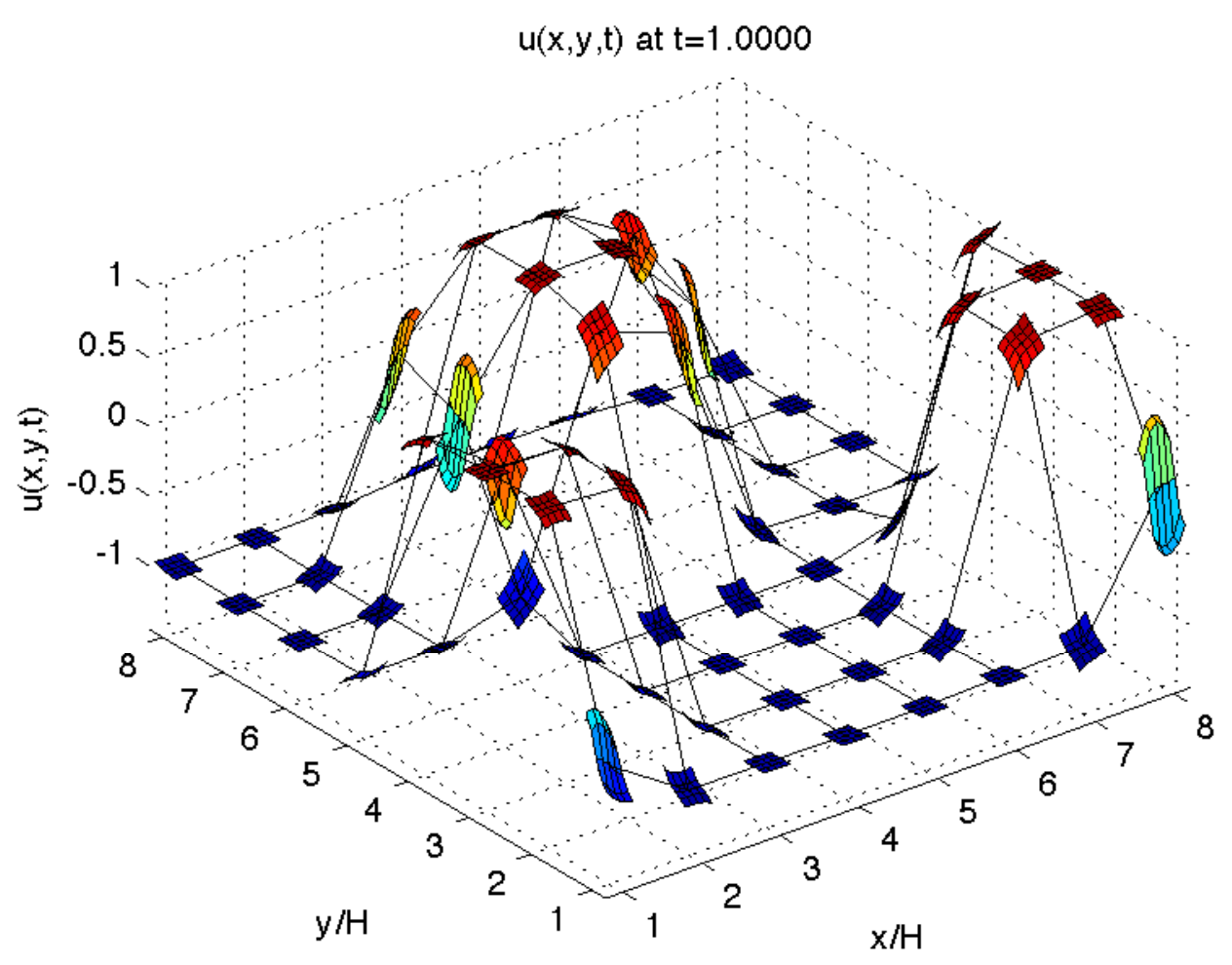}\\
\fbox{\includemovie[poster,toolbar,label=MatlabArxiv/early5-4-#2-12-10D/#1.u3d,text=(view u3d in Adobe Reader), 3Drender=SolidWireframe,
3Daac=7, 3Droll=0, 3Dc2c=-1 -1.4 0.7, 3Droo=#200, 3Dcoo=0 0 0, 3Dlights=CAD]
{.5\linewidth}{.5\linewidth}{#1.u3d}}
\parbox[b]{0.45\linewidth}{
\caption{perspective view of $#2\times#2$ patches of a microsimulator on a macroscale grid of spacing $H=9.6$ at nondimensional time $t=#3$\,.  Each patch is a microscale discretisation of the \glpde{} with nonlinearity $\alpha=3$ on a $5\times5$ fine grid: the microscale simulator executes on only~$16$\% of space.  #4}
\label{fig:Matlab/early5-4-#2-12-10D/#1}}
\end{figure}
}

\patchfigure{0}8{0}{This initial condition has random significant microscale fluctuations superimposed upon a large scale variation. This initial condition evolves to Figures~\ref{fig:Matlab/early5-4-8-12-10D/025} and~\ref{fig:Matlab/early5-4-8-12-10D/1}.}
\patchfigure{025}8{0.25}{At this time the spatial fluctuations within each patch have nearly smoothed to reflect the macroscale variations.}
\patchfigure{1}8{1}{By now the variations within each patch are smooth and the patch evolution reflects the dynamics of macroscale pattern formation.}

The key to support the gap-tooth scheme of Kevrekidis et al.~\cite{Kevrekidis03b} is to couple patches of spatial dynamics across space; Figures~\ref{fig:Matlab/early5-4-8-12-10D/0}--\ref{fig:Matlab/early5-4-8-12-10D/1} show an example.
Such coupling needs to preserve the accuracy and stability characteristics of the microscale dynamics.  
Here we use the dynamical systems theory of centre manifolds~\cite[e.g.]{Carr83b, Chicone97, Vanderbauwhede89} to guarantee a controlable level of fidelity between the microscale and the macroscale simulation.
Thus the first contribution of this article is to extend the dynamical systems approach of the so-called `holistic discretisation'~\cite[e.g.]{Roberts98a, MacKenzie05a} from one spatial dimension to the discrete modelling of the class of two dimensional, homogeneous, nonlinear reaction-diffusion equations 
\begin{equation}
\D tu=\nabla\cdot\big[f(u,\nabla u)\nabla u\big]+\alpha g(u),
\label{Erde}
\end{equation} 
for the field~$u(x,y,t)$.  In principle we could consider the \pde\ on any typical domain with Dirichlet, Neumann or mixed boundary conditions~\cite{Roberts01b}; however, for simplicity, in this article we generally restrict attention to spatially periodic solutions so that the modelling is homogeneous in space. 
Generalisation to spatial dimensions higher than two appears straightforward.

To achieve this aim of providing effective systematic closures for macroscale models in multiple dimensions, our approach systematically models subgrid microscale processes.  
For example, continuing the gap-tooth simulation of Figures~\ref{fig:Matlab/early5-4-8-12-10D/0}--\ref{fig:Matlab/early5-4-8-12-10D/1} would enable reasonable exploration of the competition between meta-stable macroscale domains where $u\approx \pm1$ in the Ginzberg--Landau \pde~\eqref{E_gl2d}.
Sections \ref{S_2D_divide}~and~\ref{sec:nccec} discuss two distinct avenues of theoretical support for our modelling: respectively that of centre manifold theory~\cite[e.g.]{Carr83b, Vanderbauwhede89, Kuznetsov95, Chicone97}, and that of classic consistency.  
Such dual justification is a strength of this approach.  

The complex dynamics we address arise through the interaction over space of local microscale dynamics whether in a \pde\ such as~\eqref{Erde}, or a discrete lattice equation~\cite[e.g.]{Hyman86a, Greenside96}, or a microscale simulator (Figures~\ref{fig:Matlab/early5-4-8-12-10D/0}--\ref{fig:Matlab/early5-4-8-12-10D/1}).  
Analysing the dynamics of a \pde\ for fixed macroscopic grid spacing, a third contribution of this article is to use centre manifold techniques to underpin accurate models of nonlinear dynamics by resolving naturally the dominant subgrid microscale structures and their interactions, both internally and with macroscale variations.  
Instead of imposing a subgrid field, such as the usual polynomial interpolation of finite differences and finite elements, here the \pde\ determines the subgrid field. 
Then the derived macroscale closures enable a relatively coarse numerical grid to significantly improve computational speed and stability in numerical solutions of the \pde.  
For example, we expect more extreme parameter regimes may be explored without the need for artificial hyper-viscosities~\cite[e.g.]{Zhang00}.

The analysis of \pde{}s in Sections~\ref{S_2D_divide}--\ref{chapnumcm} parallels, and has much commonality with, our analysis of dynamics on disjoint spatial patches in Sections~\ref{S_2D_divide} and~\ref{sec:patch}.
A micro-simulator within each patch requires boundary values.
If the microsimulator were to be executed over the entire macro-domain, then such boundary values come naturally from immediately neighbouring fine grid points; such neighbours are missing in gap-tooth simulation such as those of Figures~\ref{fig:Matlab/early5-4-8-12-10D/0}--\ref{fig:Matlab/early5-4-8-12-10D/1}.
Instead we propose the innovation that classic Lagrange interpolation from surrounding macroscale grid values provides the accurate coupling for the small microscale patches, analogous to accurate coupling of one dimensional dynamics~\cite{Roberts04d, Roberts06d}.
The centre manifold theory of Section~\ref{S_2D_divide} then supports the macroscale modelling.  
To complement this dynamical systems support, Theorem~\ref{thm:ipc} provides support for the consistency of the approach: the order of consistency growing linearly with the order of the interpolation.
This classic coupling rule establishes a strong connection between classic finite difference discretisations of \pde{}s, finite elements, and the methodology of the gap-tooth method.

Our work here presents two faces to computational simulation.  On the one hand we present a preprocessing methodology for generating potentially highly accurate closures of discretisations of \pde{}s or lattice dynamics.  These would closures would subsequently be used to markedly speed up simulations.  On the other hand we prove that the same approach provides coupling conditions for accurate and effective, on the fly, closures for the `equation-free' macroscale simulation of highly detailed microscale dynamics.\footnote{In either case, the issue of parallelising the computational simulations are the same, and familiar from usual approximations: to obtain higher order accuracy, generally use a wider computational stencil, which requires proportionally more communication between parallel processors in some domain decomposition of the computation.}

As a particular example, previewed in Figures~\ref{fig:Matlab/early5-4-8-12-10D/0}--\ref{fig:Matlab/early5-4-8-12-10D/1}, Sections~\ref{S_2D_low}, \ref{chapnumcm} and~\ref{sec:patch} explore in some detail the modelling of the real valued, two dimensional, Ginzburg--Landau equation obtained from the \pde~\eqref{Erde} with cubic reaction, $g=u-u^3$, and constant diffusion, $f=1$\,, namely
\begin{equation}
\D tu=\nabla^2u+\alpha(u-u^3)\,. \label{E_gl2d}
\end{equation}
We choose this 2D~real Ginzburg--Landau equation as an example prototype reaction-diffusion \pde\ because its dynamics are well understood~\cite[e.g.]{Gibbon93, Levermore96}, and because this \pde\ is important as a phenomenological model~\cite[p.6, e.g.]{Lowndes1999}: much interest lies in the long time evolution of the interacting domains of the quasi-stable states $u\approx\pm1$\,.
In Section~\ref{S_num_2D_perf} the steady states of the example2D~Ginzburg--Landau equation~\eqref{E_gl2d}, and their stability, measure the accuracy and effectiveness of various order models in this application.
Section~\ref{S_2D_perf} briefly compares a low order model with a classic finite difference model to indicate their similar performance and to provide a base for comparing high order models.
MacKenzie~\cite{MacKenzie05} reported that our modelling, based upon a mosaic of local dynamics, is much less costly to use than a global description of an inertial manifold~\cite[e.g.]{Temam90, Marion89, Jolly90}.

The macroscale model is based upon dividing the domain into either overlapping finite elements or disjoint small patches.
Following analogues in one dimension~\cite[e.g.]{Roberts00a,MacKenzie05a}, neighbouring elements\slash patches are coupled with strength parametrised by~$\gamma$.
Section~\ref{S_2Dcm} then discusses how centre manifold theory~\cite[e.g.]{Carr83b, Vanderbauwhede89, Kuznetsov95, Chicone97} assures us of the existence of an exactly closed discrete model.
Further, this discretisation is exponentially quickly attractive---it contains the emergent dynamics.
Although we cannot exactly construct this closure, theory asserts it may be approximated to any order in the strength of the inter-element\slash patch coupling~$\gamma$ and the nonlinearity~$\alpha$.

The special coupling conditions~\eqref{EbcsdL} assure us that the resultant macroscale discrete models are \emph{also} consistent with the dynamics of the reaction-diffusion \pde\ (Section~\ref{sec:nccec}).
A further contribution of this article is that the proofs for consistency in Section~\ref{sec:nccec} are new and more powerful, leading to new theorems on nonlinear and multidimensional consistency, and to a new theorem on the consistency of patch dynamics.

Section~\ref{S_2D_low} outlines the construction, consistency and predictive accuracy of low order asymptotic approximations to the macroscale discretisation of the \glpde{}.
To extract another order of accuracy from the algebra, we find (for the first time) the adjoint operator of the diffusion operator on the elements\slash patches with the nonlocal coupling conditions.
The null space of this adjoint, strikingly similar to a Galerkin basis, enables us to use an integral solvability condition to construct the third order discrete model.

However, we cannot algebraically find higher order models nor any of the patch models.  
This inability to construct algebraic approximations is one major difference between systems in one and multiple spatial dimensions.
In a further contribution, Section~\ref{chapnumcm} introduces how to numerically construct the microscale subgrid field and its evolution in 2D reaction-diffusion \pde{}s using the Ginzburg--Landau \pde\ as an example.
Such integration of numerical solutions for the subgrid field in complex algebraic expressions for the macroscale parametrisation is novel.
We find that even a relatively coarse subgrid microscale resolution is adequate to reasonably accurately underpin the macroscale modelling.

\section{Divide the domain into elements\slash patches}
\label{S_2D_divide}

We place the discrete macroscale modelling of general, two dimensional, reaction-diffusion dynamics within the purview of centre manifold theory by dividing the domain into either overlapping square elements or into small disjoint separated patches, as shown schematically in Figure~\ref{ch5f2dst}.

\begin{figure}
\begin{center}
\small \setlength{\unitlength}{0.26em}
\begin{picture}(80,80)
\thicklines 
\multiput(10,0)(20,0){4}{
  \multiput(0,0.5)(0,3){27}{\line(0,1){2}}}
\multiput(0,10)(0,20){4}{
  \multiput(0.5,0)(3,0){27}{\line(1,0){2}}}
\multiput(20,20)(20,0){3}{
\multiput(0,0)(0,20){3}{\circle*{2}} } 
\put(12,15){$i-1,j-1$}
\put(34,15){$i,j-1$} \put(52,15){$i+1,j-1$} \put(14,35){$i-1,j$}
\put(38,35){$i,j$} \put(54,35){$i+1,j$} \put(12,55){$i-1,j+1$}
\put(34,55){$i,j+1$} \put(52,55){$i+1,j+1$}
\put(30,6){\vector(1,0){20}}
\put(50,6){\vector(-1,0){20}}
\put(40,3){$H$}
\put(36,26){$2rH$}
\color{blue}
\multiput(20,20)(40,0){2}{\line(0,1){40}}
\multiput(20,20)(0,40){2}{\line(1,0){40}}
\put(40,24.5){\vector(1,0){20}}
\put(40,24.5){\vector(-1,0){20}}
\color{magenta}
\multiput(34,34)(12,0){2}{\line(0,1){12}}
\multiput(34,34)(0,12){2}{\line(1,0){12}}
\put(40,31.5){\vector(1,0){6}}
\put(40,31.5){\vector(-1,0){6}}
\end{picture}
\end{center}
\caption{Discretise a 2D domain into square elements\slash patches.
The $i,j$th~element\slash patch (solid blue\slash magenta) is centred upon the grid point $(X_i,Y_j)$: when $r=1$ (blue) $E_{i,j}$~overlaps neighbouring elements to extend to the neighbouring grid points; when $r<1/2$ (magenta) $E_{i,j}$~forms a patch separated by empty space (gaps) from neighbouring patches.}
\label{ch5f2dst}
\end{figure}
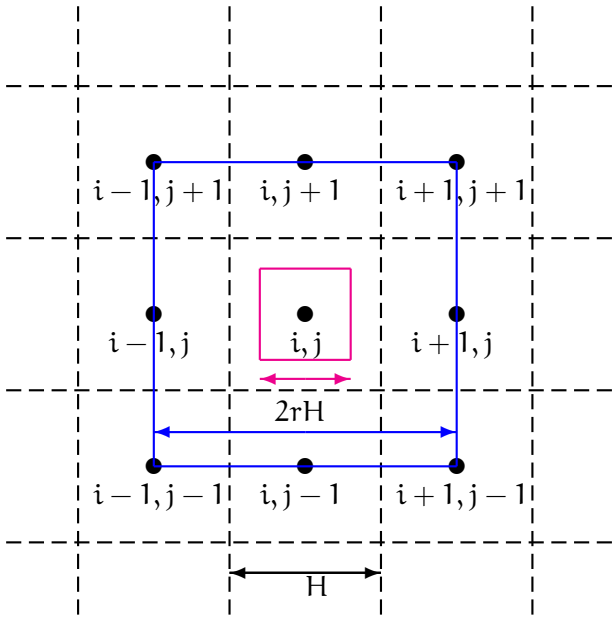

\subsection{Extend non-local coupling conditions to multiple dimensions}

Define a grid of points~$(X_i,Y_j)$ with, for simplicity, constant spacing~$H$, see Figure~\ref{ch5f2dst}.
The $i,j$th element,~$E_{i,j}$, is centred upon~$(X_i,Y_j)$ and of width $\Delta x=\Delta y = 2rH$\,: when $r=1$ we cater for the overlapping elements of holistic discretisation~\cite[e.g.]{Roberts00a}; when $r<1/2$ we cater for the spatially separated patches of equation free modelling~\cite[e.g.]{Roberts06d, Kevrekidis09a}.
Let $u_{i,j}(x,y,t)$ denote the field in the $i,j$th~element\slash patch.
The fields~$u_{i,j}(x,y,t)$ evolve according to the reaction-diffusion \pde~\eqref{Erde}; that is, \begin{equation}
\D t{u_{i,j}}=\nabla\cdot\big[f(u_{i,j},\nabla u_{i,j})\nabla u_{i,j}\big]+\alpha g(u_{i,j})\,.
\label{Erdev}
\end{equation}
The original field~$u(x,y,t)$ is then predicted by $u_{i,j}(x,y,t)$ for $(x,y)\in E_{i,j}$. 

The evolution of the field over the whole domain depends upon how the elements\slash patches are coupled together.
To couple the dynamics of each overlapping element to its neighbours, the case $r=1$\,, we use `inter-element coupling conditions' (\icc) around the $i,j$th~element of
\begin{equation}
\begin{cases}
u_{i,j}(X_{i\pm1},y,t)=
\gamma u_{i\pm1,j}(X_{i\pm1},y,t)+(1-\gamma)u_{i,j}(X_i,y,t)
,&  |y-Y_j|<H\,, \\
u_{i,j}(x,Y_{j\pm1},t)=
\gamma u_{i,j\pm1}(x,Y_{j\pm1},t)+(1-\gamma)u_{i,j}(x,Y_j,t),
& |x-X_i|<H\,. 
\end{cases}
\label{EbcsdL}
\end{equation}
These \icc\ are a natural extension to 2D of \icc\ established for 1D dynamics~\cite[e.g.]{Roberts00a}.
The crucial feature is: with $\gamma=0$ the elements are effectively isolated from each other, dividing the domain into decoupled elements with consequently independent dynamics; whereas with $\gamma=1$ these \icc\ ensure sufficient continuity between elements to recover the original problem over all space.
The \icc~\eqref{EbcsdL} embeds the physical problem, parameter $\gamma=1$\,, into a family of problems, general~$\gamma$, and then we access the physical problem from the tractable base at parameter $\gamma=0$\,.
Modelling via these overlapping elements is called `holistic' because within these elements we resolve subgrid structures by systematically approximating solutions of the \pde~\eqref{Erdev}; that is, the \pde\ itself tells us what are appropriate subgrid fields. 
In contrast, methods such as finite differences, finite elements and finite volumes, impose an assumed subgrid field upon the elements (typically a relatively low order multinomial).
Interestingly, the coupling~\eqref{EbcsdL} of overlapping elements appears to have analogues in other multiscale methods: the `border regions' of the heterogeneous multiscale method~\cite[e.g.]{E04}, the `buffers' of the gap-tooth method~\cite[e.g.]{Samaey03b}, and the overlapping domain decomposition that improves convergence in waveform relaxation of parabolic \textsc{pde}s~\cite[e.g.]{Gander98}.

We now consider the multiscale case of how to couple patches across space, as shown in Figures~\ref{fig:Matlab/early5-4-8-12-10D/0}--\ref{fig:Matlab/early5-4-8-12-10D/1}.
To couple the dynamics of each separated patch to its neighbours, across empty space as this is the case $r<1/2$\,, we use the different coupling conditions (\icc) around the $i,j$th~patch of
\begin{equation}
\begin{cases}
u_{i,j}(X_i\pm rH,y,t)= 
\shift _i^{\pm r}(\gamma)\shift _j^{\pm\eta}(\gamma)u_{i,j}(X_i,Y_j,t)\,,
&  |y-Y_j|<rH\,, \\
u_{i,j}(x,Y_j\pm rH,t)= 
\shift _i^{\pm \xi}(\gamma)\shift _j^{\pm r}(\gamma)u_{i,j}(X_i,Y_j,t)\,,
& |x-X_i|<rH\,, 
\end{cases}
\label{eq:epcc}
\end{equation}
in terms of subgrid variables $\xi=(x-X_i)/H$ and $\eta=(y-Y_j)/H$, and the discrete shift operator~$\shift $.  
The reason for the difference between the \icc~\eqref{EbcsdL} and~\eqref{eq:epcc} arises because the patch \icc~\eqref{eq:epcc} depend only upon the neighbouring grid values $\uij(t)=u_{i,j}(X_i,Y_j,t)$: in large scale computational simulation, the \icc~\eqref{eq:epcc} minimise communication between patches in comparison with the \icc~\eqref{EbcsdL} which require data along the mid-patch lines to be communicated with neighbouring elements.
The shift operators $\shift _i^{\pm \xi}(\gamma)$~and~$\shift _j^{\pm \eta}(\gamma)$ on the right-hand side of the \icc~\eqref{eq:epcc} derive from classic Lagrange interpolation through neighbouring grid values, but ameliorated by the coupling parameter~$\gamma$.  These $\gamma$~ameliorated shift operators are best defined in terms of the well known centred difference~$\delta$ and mean~$\mu$ operators: for each direction indicated in~\eqref{eq:epcc} by $i$~and~$j$, the $\gamma$~ameliorated shift by an fraction~$\pm\xi$ of a grid spacing is represented in classic operator algebra~\cite[e.g.]{npl61, Hildebrand1987} as
\begin{align}
\shift ^{\pm\xi}(\gamma)&=\left[1+\gamma(\pm\mu\delta+\rat12\delta^2)\right]^\xi
\nonumber\\&
=1+\xi\gamma(\pm\mu\delta+\rat12\delta^2)
+\rat12\xi(\xi-1)\gamma^2(\pm\mu\delta+\rat12\delta^2)^2
+\cdots\,.
\label{eq:shifty}
\end{align}
Identically for the \icc~\eqref{EbcsdL}, a crucial feature of the \icc~\eqref{eq:epcc} is: with $\gamma=0$ the patches are effectively isolated from each other, dividing the domain into decoupled patches with consequently independent dynamics; whereas with $\gamma=1$ these \icc\ ensure each patch matches those in its neighbourhood via classic Lagrange interpolation.

Whether patches or overlapping elements we need domain boundary conditions to form a well posed problem.
For definiteness in theoretical support, let there be $m$~elements\slash patches in the spatial domain~$\Omega$ with the field required to be periodic in both $x$~and~$y$, and the field to be in a Sobolev space such as~$H^2(\Omega)$.
For example, the elements\slash patches may form a $\sqrt m\times\sqrt m$ grid in the domain (any factorisation of~$m$ is feasible).
In principle, the 2D~elements\slash patches could be any shapes, regular or irregular; square ones appear to be easiest to start with.
Although physical domain boundary conditions have been explored for $1$D domains~\cite[e.g.]{Roberts01b, MacKenzie03}, in this initial work we avoid issues of \emph{physical domain} boundary conditions on the domain~$\Omega$ by the doubly periodic condition.
Such doubly periodic boundary conditions enable the most straightforward theoretical statements of support.

\subsection{Centre manifold theory supports discrete models}
\label{S_2Dcm}
This section describes in detail how the \icc{}~\eqref{EbcsdL} or~\eqref{eq:epcc} lead to centre manifold theory~\cite[e.g.]{Carr83b, Vanderbauwhede89, Kuznetsov95, Chicone97} supporting an accurately closed, discrete model for general reaction-diffusion systems~\eqref{Erde}  via its dynamics~\eqref{Erdev} within coupled elements.  Figure~\ref{fig:patcheigenv} shows one example of the scale separation that underlies this modelling: the figure shows a large spectral gap between the `small' eigenvalues (decay rates) of the macroscale modes and the eigenvalues${}<-40$ of the rapidly decaying microscale modes. 

A homotopy in the coupling parameter~$\gamma$ connects the physically relevant macroscale discretisation to a theoretically tractable base.
When parameters $\alpha=\gamma=0$ both the \pde\ reaction and the coupling on the right-hand side of the \icc~\eqref{EbcsdL} or~\eqref{eq:epcc} disappear.
The elements\slash patches are then effectively isolated from each other and so the resultant diffusion in the \pde~\eqref{Erdev} is particularly simple: exponentially quickly in time, the field~$u_{i,j}$ becomes independently constant within each element.
We use this $m$~parameter family of piecewise constant solutions as a basis for analysing the case when the elements\slash patches are coupled together, $\gamma\neq0$\,.
Particularly interesting is the approximation for full coupling, $\gamma=1$\,, when the \pde~\eqref{Erde} is effectively restored: over the whole domain because \icc~\eqref{EbcsdL} then ensure sufficient continuity between adjacent elements as described previously for \pde{}s in one spatial dimension~\cite[e.g.]{Roberts98a, MacKenzie05a, Roberts06d}; or because the \icc~\eqref{eq:epcc} connect patches over the gaps via clasic Lagrange interpolation.

The support of centre manifold theory~\cite[e.g.]{Carr83b, Vanderbauwhede89, Kuznetsov95, Chicone97} is based upon a linear picture of the dynamics.
Adjoin to the \pde~\eqref{Erde}  the dynamically trivial equations
\begin{equation}
    \frac{\partial \alpha}{\partial t}=\frac{\partial \gamma}{\partial t}=0\,,
    \label{EtrivL}
\end{equation}
and consider the reaction-diffusion dynamics in the extended state space $(u_{i,j}(x,y),\gamma,\alpha)$.
In this extended state space, points $\alpha=\gamma=0$ and $u_{i,j}={}$constant, say~$\uij$, are equilibria of the diffusion~\eqref{Erdev}, hence these form a subspace of equilibria in the extended state space, the subspace $\bbE_0=\{(\uij,0,0)\mid \text{for all }U_{i,j}\}$.
Linearized about each of the equilibria in~$\bbE_0$, the \pde\ for perturbations~$u'_{i,j}(x,y,t)$ within each element\slash patch is then the constant coefficient diffusion \pde\ 
\begin{equation}
    \frac{\partial u'_{i,j}}{\partial t}=f_{i,j}\nabla^2u'_{i,j}
    \quad\text{for }(x,y)\in E_{i,j}\text{ for each }i,j,
    \label{eq:ccdpde}
\end{equation}
where the constant diffusivities $f_{i,j}=f(\uij,0)$.
These \pde{}s are decoupled because they are to be solved with the $\gamma=0$ \icc
\begin{equation}
\begin{cases}
u'_{i,j}(X_{i}\pm rH,y,t)=u'_{i,j}(X_i,\{y,Y_j\},t),& |y-Y_j|<rH\,,
\\
u'_{i,j}(x,Y_{j}\pm rH,t)=u'_{i,j}(\{x,X_i\},Y_j,t),& |x-X_i|<rH\,,
\end{cases}
\label{eq:ccdbc}
\end{equation}
where the first alternative in braces on the two right-hand sides corresponds to the elemental \icc~\eqref{EbcsdL}, whereas the second alternative in braces corresponds to the patch \icc~\eqref{eq:epcc}.
Among other eigenmodes such as $u'_{i,j}=1$ with eigenvalue zero, separation of variables shows that the following are some of the linear eigenmodes associated with each element\slash patch:
\begin{equation*}
\alpha=\gamma=0\,,
\quad
u'_{i,j}\propto e^{\lambda_{i,j,k,l} t} \sin(k\pi\xi/r) \sin(l\pi\eta/r) ,
\end{equation*}
inside the $i,j$th~element for all integers $k,l=1,2,3,\ldots$, where the eigenvalues corresponding to each of these modes are
\begin{equation}
    \lambda_{i,j,k,l}=-f_{i,j}\frac{(k^2+l^2)\pi^2}{r^2H^2}\,.
    \label{Eeigen}
\end{equation}
Simple numerical solutions of the \pde~\eqref{eq:ccdpde} and boundary conditions~\eqref{eq:ccdbc}\footnote{The eigenproblem for the \pde\ and boundary conditions were solved via second order finite differences on microscale grids of size $9\times9$, $17\times 17$ and $33\times33$.
Then the eigenvalues were extrapolated with a Shanks transform to approximately  the reported accuracy.} confirm that~\eqref{Eeigen} for $k,l=0,1,2,3,\ldots$ are the only eigenvalues on elements ($k,l=0$ included here), but that on patches the eigenvalues, in addition to~\eqref{Eeigen} for $k,l=1,2,3,\ldots$, are~$\hat\lambda f_{i,j}\pi^2/(r^2H^2)$ for $\hat\lambda=0,-1.250,-3.250,-3.73\pm i2.00,\ldots$\,.   
These eigenvalues~$\hat\lambda f_{i,j}\pi^2/(r^2H^2)$ of the decoupled dynamics approximate the clusters shown by the histogram of Figure~\ref{fig:patcheigenv} for the fully coupled dynamics.
In addition there are the two trivial `parameter' eigenmodes: firstly $\gamma={}$constant and $\alpha=u'_{i,j}=0$\,; and secondly, $\alpha={}$constant and $\gamma=u'_{i,j}=0$\,.
In a spatial domain with $m$~elements\slash patches and when all diffusivities~$f_{i,j}>0$\,, then all eigenvalues are negative, $-f_{i,j}\pi^2/(r^2H^2)$ or less, except for $m+2$ zero eigenvalues.
Of the $m+2$ zero eigenvalues, one is associated with each of the $m$~elements\slash patches and two come from the trivial equations~\eqref{EtrivL} for the parameters.
That is, the slow subspace is $\{(u_{i,j},\gamma,\alpha)\}$ for constant~$u_{i,j}$.
The above spectrum establishes the following corollary of a centre manifold existence theorem~\cite[p.281, p.96 respectively]{Carr83b, Vanderbauwhede89}.

\begin{corollary}[Existence] \label{cor:exist}
Provided the nonlinear diffusivity~$f$ and reaction~$g$ in~\eqref{Erdev} are sufficiently smooth, and all $f_{i,j}>0$ then  an $m+2$~dimensional slow manifold~${\cal M}$ exists for~\eqref{Erdev}~and~\eqref{EbcsdL}, or \eqref{Erdev}~and~\eqref{eq:epcc}, in some finite neighbourhood of the subspace~$\bbE_0$ of equilibria.\footnote{Keep clear the distinction between centre manifold theory and the slow manifolds discussed here: centre manifold theory applies to systems where the \emph{real part} of the eigenvalues of critical modes are zero; whereas here we explore and construct the particular case of slow manifolds because here the relevant eigenvalues are \emph{precisely} zero.}
\end{corollary}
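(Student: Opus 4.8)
The plan is to verify that the hypotheses of a standard centre (slow) manifold existence theorem are met by the extended system and then invoke it. The theorem I have in mind is the one referenced, for example the formulation in \cite{Carr83b} or \cite{Vanderbauwhede89}: given a system of the form $\dot w = \mathcal A w + N(w)$ on a Banach space, where $\mathcal A$ is a linear operator whose spectrum splits into a finite-dimensional part on the imaginary axis and a remainder bounded uniformly away from it (a \emph{spectral gap}), and where the nonlinearity $N$ is sufficiently smooth and vanishes to first order at the relevant equilibrium, there exists a locally invariant, exponentially attractive manifold tangent to the critical (here, slow) eigenspace. So the proof is essentially a matter of casting our coupled problem \eqref{Erdev} with \icc\ \eqref{EbcsdL} (or \eqref{eq:epcc}) and the trivial equations \eqref{EtrivL} into exactly this abstract framework and checking each hypothesis.

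First I would set up the state space and the linearisation. The extended state is $(u_{i,j}(x,y),\gamma,\alpha)$ with $u_{i,j}$ ranging over the relevant Sobolev space (the doubly periodic $H^2(\Omega)$ chosen in the text) on each of the $m$ elements/patches, subject to the $\gamma=0$ coupling \eqref{eq:ccdbc}. The linear operator $\mathcal A$ is the block-diagonal diffusion operator $f_{i,j}\nabla^2$ of \eqref{eq:ccdpde} augmented by the two trivial parameter directions. The central step is the spectral analysis already carried out in the excerpt: by separation of variables the eigenvalues on each element are $\lambda_{i,j,k,l}$ from \eqref{Eeigen}, and on patches the additional discrete set $\hat\lambda f_{i,j}\pi^2/(r^2H^2)$. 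I would record that, assuming all $f_{i,j}>0$, exactly $m+2$ eigenvalues are zero (one constant mode per element/patch, plus the two parameter modes) while every remaining eigenvalue is negative and bounded above by $-f_{\min}\pi^2/(r^2H^2)$ where $f_{\min}=\min_{i,j}f_{i,j}>0$. This establishes the crucial \emph{spectral gap}: the critical spectrum is precisely $\{0\}$, finite-dimensional of dimension $m+2$, and is strictly separated from the stable spectrum.

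Next I would verify the smoothness and nondegeneracy of the nonlinearity. The nonlinear terms come from expanding the diffusion $\nabla\!\cdot[f(u,\nabla u)\nabla u]$ and reaction $\alpha g(u)$ about a constant state, together with the $\gamma$-dependent coupling in the \icc; by the hypothesis that $f$ and $g$ are sufficiently smooth, the resulting nonlinearity $N$ is $C^k$ in a neighbourhood of $\bbE_0$ for the required order, and it vanishes together with its first derivative at each equilibrium of $\bbE_0$ (since at $\alpha=\gamma=0$ the constant fields are exact equilibria). Because the reaction $\alpha g$ and the coupling both carry the factors $\alpha$ or $\gamma$, which are themselves treated as dynamical variables frozen by \eqref{EtrivL}, the nonlinearity genuinely contains these as amplitudes, which is what makes the homotopy from $\gamma=0$ to the physical $\gamma=1$ legitimate. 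With the spectral gap and the smoothness in hand, the cited existence theorem applies directly and yields an $m+2$ dimensional locally invariant slow manifold $\mathcal M$ through $\bbE_0$, tangent to the slow subspace $\{(u_{i,j},\gamma,\alpha): u_{i,j}\text{ constant}\}$.

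The main obstacle, and the step I would spend the most care on, is the rigorous functional-analytic justification that the cited finite-dimensional centre manifold theorems apply to what is genuinely an infinite-dimensional problem (the field $u_{i,j}(x,y)$ lives in a Sobolev space, so the stable spectrum is an infinite discrete sequence accumulating at $-\infty$). I would need the sectorial/analytic-semigroup structure of the diffusion operator under the coupling boundary conditions \eqref{eq:ccdbc}, so that $\mathcal A$ generates a strongly continuous semigroup with the required exponential dichotomy; the infinite-dimensional centre manifold theory of Vanderbauwhede--Iooss or the semigroup version in \cite{Carr83b} covers exactly this situation, provided the spectral gap is uniform — which our bound $-f_{\min}\pi^2/(r^2H^2)$ supplies. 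A secondary delicate point is confirming that the $\gamma=0$ boundary operator defines a self-consistent, densely-defined sectorial operator on $H^2$; this is where I would check the patch case \eqref{eq:epcc} separately, since the excerpt notes its spectrum contains the extra cluster $\hat\lambda$ and even a complex conjugate pair $-3.73\pm i2.00$, so the ``slow'' versus ``fast'' split must be read off from the \emph{real parts}, all of which remain bounded away from zero. Once that uniform gap is confirmed in both the element and patch cases, the conclusion of Corollary~\ref{cor:exist} follows immediately from the referenced theorems.
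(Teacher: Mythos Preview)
Your proposal is correct and follows essentially the same route as the paper: the paper's argument is precisely the spectral computation you summarise (the $m+2$ zero eigenvalues from the constant modes and the two parameter directions, with all remaining eigenvalues bounded above by $-f_{i,j}\pi^2/(r^2H^2)$), followed by a direct appeal to the centre manifold existence theorems of \cite{Carr83b} and \cite{Vanderbauwhede89}. You supply more of the functional-analytic scaffolding (sectoriality, semigroup dichotomy, the care over complex eigenvalues in the patch case) than the paper makes explicit, but the structure and the invoked theorem are the same.
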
 
The slow manifold ${\cal M}$ is parametrized both by the two parameters $\gamma$~and~$\alpha$, and by a measure of the field in each element\slash patch; we use the grid value $\uij(t)=u_{i,j}(X_i,Y_j,t)$ to measure the field in the $i,j$th~element\slash patch.
Using $\vec U$ to denote the vector of such parameters, we write the slow manifold~${\cal M}$  as the subgrid fields
\begin{equation}
    u_{i,j}=u_{i,j}(x,y;\vec U,\gamma,\alpha).
    \label{EcmvL}
\end{equation}
These functions~$u_{i,j}(x,y;\vec U,\gamma,\alpha)$, that Sections \ref{S_2D_low}~and~\ref{chapnumcm} construct for the \glpde{}, resolve the subgrid scale physical structures as a function of the neighbouring grid values in~$\vec U$.
On this slow manifold~${\cal M}$ the grid values~$\uij$ evolve deterministically:
\begin{equation}
    d \uij/dt=\dotuij=g_{i,j}(\vec U,\gamma,\alpha)\,,
    \label{EcmgL}
\end{equation}
where $g_{i,j}$~is the restriction of~\eqref{Erdev}~and~\eqref{EbcsdL}, or \eqref{Erdev}~and~\eqref{eq:epcc}, to the slow manifold~${\cal M}$.
In essence, this closure of the grid scale dynamics comes from the resolution of subgrid scale structures.
The set of \ode{}s~\eqref{EcmgL} form the discrete model of the original \pde.

Using the value of the field  at the grid points to parametrise the slow manifold provides the necessary `amplitude conditions' to close the problem:
\begin{equation}
\uij=u_{i,j}(X_i,Y_j;\vec U,\gamma,\alpha).
\label{Eampl}
\end{equation}  
Many other amplitude conditions are possible such as defining the `amplitudes'~$\uij$ to be the mean field over the $i,j$th~element\slash patch.
However, using the grid values are simple, traditional, and have a direct physical meaning.

Centre manifold theorems~\cite[e.g.]{Carr83b, Vanderbauwhede89} also support the following crucial emergence (called asymptotic completeness by Robinson~\cite{Robinson96}) and approximation properties.

\begin{corollary}[Emergence and approximation] \label{cor:rc}
Provided the nonlinear diffusivity~$f$ and reaction~$g$
in~\eqref{Erdev} are sufficiently smooth, and all $f_{i,j}>0$\,, then  
\begin{itemize}
    \item every solution of the nonlinear reaction-diffusion dynamics~\eqref{Erdev}~and~\eqref{EbcsdL}, or \eqref{Erdev}~and~\eqref{eq:epcc}, that stays within a neighbourhood of the slow manifold~$\cal M$,~\eqref{EcmvL}, approaches exponentially quickly a solution of the discrete model~\eqref{EcmgL} on the slow manifold~\eqref{EcmvL}; and

    \item the order of error in asymptotically approximating the slow manifold and its evolution, \eqref{EcmvL}--\eqref{EcmgL}, is the same as the order of residuals of the governing equations~\eqref{Erdev}~and~\eqref{EbcsdL}, or \eqref{Erdev}~and~\eqref{eq:epcc}.
In particular, because the base equilibria form a subspace~$\bbE_0$, here the approximation is global in the grid values~$\uij$, it is \emph{local only} in the coupling~$\gamma$ and nonlinearity~$\alpha$.
\end{itemize}
\end{corollary}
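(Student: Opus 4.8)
The plan is to recognise both assertions as standard consequences of the same centre manifold theorems~\cite{Carr83b,Vanderbauwhede89} already used for Corollary~\ref{cor:exist}, so that the substance of the proof is the verification that their hypotheses hold in this infinite-dimensional, $\gamma$-parametrised setting, after which the two bullet points are read off directly. First I would fix the functional-analytic framework of Corollary~\ref{cor:exist}: the state $(u'_{i,j},\gamma,\alpha)$ lives in the product of the Sobolev space~$H^2(\Omega)$ with~$\mathbb R^2$, the evolution is~\eqref{Erdev} together with the trivial equations~\eqref{EtrivL}, and the coupling conditions~\eqref{EbcsdL} or~\eqref{eq:epcc} are encoded in the domain of the spatial operator. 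The linearisation about any equilibrium in~$\bbE_0$ is the decoupled diffusion operator~\eqref{eq:ccdpde} under the $\gamma=0$ conditions~\eqref{eq:ccdbc}; this is a sectorial realisation of the Laplacian---self-adjoint with real spectrum for the overlapping elements, and with the real and complex-conjugate spectrum computed above for the patches---so it generates an analytic semigroup, which supplies the semigroup hypothesis the cited theorems require.

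Second I would record the spectral splitting already established by~\eqref{Eeigen} and its patch counterpart: exactly $m+2$ zero eigenvalues---one constant mode per element\slash patch plus the two parameter directions of~\eqref{EtrivL}---separated by a genuine gap from a stable part whose eigenvalues (including the complex pair for patches) have real part at most $-\min_{i,j}f_{i,j}\pi^2/(r^2H^2)<0$ whenever all $f_{i,j}>0$. The decisive point for \emph{emergence} is that no eigenvalue has strictly positive real part and the critical spectrum is \emph{precisely} zero, so the slow manifold~${\cal M}$ of Corollary~\ref{cor:exist} is not merely invariant but exponentially attracting. Invoking the attractivity\slash reduction theorem (the asymptotic completeness of Robinson~\cite{Robinson96}) then yields the first bullet: any trajectory remaining in a neighbourhood of~${\cal M}$ is shadowed by, and converges exponentially---at a rate set by the spectral gap---to, a trajectory of the reduced model~\eqref{EcmgL} on~${\cal M}$.

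Third, for the approximation bullet I would apply the approximation theorem of centre manifold theory: when a trial subgrid field~$\Psi$ is substituted into the governing equations~\eqref{Erdev} and the coupling conditions and leaves a residual of some order, the true slow manifold~\eqref{EcmvL} and its induced evolution~\eqref{EcmgL} agree with~$\Psi$ and its evolution to that same order. The structural point to make carefully is the meaning of ``order'': because the base~$\bbE_0$ is an entire \emph{subspace} of equilibria parametrised by the grid values~$\vec U$, rather than an isolated fixed point, the reduction is exact in~$\vec U$ and the residual is measured only in the remaining central directions, namely in powers of the coupling~$\gamma$ and the nonlinearity~$\alpha$. This is precisely the ``global in~$\uij$, local only in~$\gamma$ and~$\alpha$'' clause of the corollary, and it is what licenses the truncated expansions constructed in Sections~\ref{S_2D_low}~and~\ref{chapnumcm}.

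The main obstacle I anticipate is not citing the theorems but verifying that the $\gamma$-dependent coupling conditions fit their hypotheses, especially the nonlocal patch conditions~\eqref{eq:epcc} whose right-hand sides carry the Lagrange-interpolation shift operators~$\shift$. One must show that switching on~$\gamma$ is a smooth (indeed analytic) perturbation of the clean $\gamma=0$ operator~\eqref{eq:ccdbc} that preserves the spectral gap throughout a neighbourhood of $\gamma=0$, so that the splitting used above persists and the finite-dimensional critical subspace varies smoothly. The sectorial structure at $\gamma=0$ together with the analytic dependence of~\eqref{eq:shifty} on~$\gamma$ reduces this to a perturbation argument rather than a fresh spectral computation, but it is the step that genuinely needs care before the standard corollaries may be quoted.
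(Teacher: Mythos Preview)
Your proposal is correct and follows essentially the same approach as the paper: the paper itself offers no proof beyond the sentence preceding the corollary, simply citing the centre manifold theorems of Carr~\cite{Carr83b} and Vanderbauwhede~\cite{Vanderbauwhede89} (and Robinson~\cite{Robinson96} for asymptotic completeness) as supplying both the emergence and approximation properties once the spectral setup of Corollary~\ref{cor:exist} is in place. You have supplied considerably more detail than the paper does---the sectorial\slash semigroup verification, the explicit spectral gap, and the care over the $\gamma$-dependence of the coupling conditions---but the route is the same direct invocation of the standard theorems.
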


\section{A slow manifold discretisation of the Ginzburg--Landau PDE}
\label{S_2D_low}

We now explore constructing slow manifold, discrete models for a specific example reaction-diffusion system, the 2D \glpde.
In applications this construction is a preprocessing step prior to large scale numerical simulation.  The support of centre manifold theory for our exotic closures suggests subsequent numerical simulation will be significantly more accurate and/or efficient.

This section only addresses the slow manifold discretisation on overlapping elements with the aim of deriving and testing accurate nonlinear closures of the reaction-diffusion dynamics; later sections return to the slow manifold view of the dynamics on spatially separated patches.
In contrast to one spatial dimension where it is straightforward to construct slow manifold discretisations for general \pde{}s (see our web service~\cite{Roberts02a,Roberts02b}), one obstacle in multiple dimensions is the limited range of slow manifolds representable by multi-variate polynomials.
Consequently, the next section extends the approach via numerical solution of the subgrid fields.
The subgrid fields on patches require such numerical solutions and so patches are left for later sections. 

Substituting the slow manifold ansatz \eqref{EcmvL}~and~\eqref{EcmgL} into the  \glpde, assuming solutions are doubly periodic, we obtain the {\textsc{pde}} to solve for the model by equating the \pde\ for~$\D t{u_{i,j}}$ to that obtained by the chain rule:
\begin{equation}
\D t{u_{i,j}}=\sum_{k,l}\D{U_{k,l}}{u_{i,j}} g_{k,l}
=\nabla^2u_{i,j}+\alpha \left(u_{i,j}-u_{i,j}^3\right)\,. \label{EcmpdeL}
\end{equation}
To construct the slow manifold~\eqref{EcmvL}--\eqref{EcmgL} by solving the \pde~\eqref{EcmpdeL} with coupling and amplitude conditions involves considerable algebraic detail.
Computer algebra handles the algebraic details of the construction by iteration~\cite[e.g.]{Roberts96a}.
The precise procedure used here is fully documented on a freely accessible, preprint server~\cite{MacKenzie11a}
to empower readers to check, reproduce and build upon our quoted results.
The methodology constructs a model by driving to zero, to some order of error, the residuals of the governing differential equation~\eqref{EcmpdeL} and the inter-element\slash patch coupling \icc~\eqref{EbcsdL} or~\eqref{eq:epcc}.
By the Approximation Corollary~\ref{cor:rc} we construct correspondingly accurate approximations to the slow manifold of~\eqref{EcmpdeL}.
These approximations, upon setting coupling parameter $\gamma=1$ and the nonlinearity parameter~$\alpha$ appropriately, form 2D discrete models of the 2D~\glpde{}.

Recall that although the approximations to the slow manifolds considered here are asymptotic, the existence of an emergent slow manifold holds in a \emph{finite} domain around the slow subspace~$\bbE_0$ (Corollaries~\ref{cor:exist}--\ref{cor:rc}).   Thus although we \emph{construct} and label the slow manifold approximations via asymptotics in small parameters, the resulting models hold for \emph{finite} parameter values.  The issue is how well do the approximations perform: we give evidence that reasonable results are straightforwardly obtained for low order approximations for nonlinear parameter~$\alpha$ up to about~$30$.

One consequence of using computer algebra is that there is no need to record in this article most of the considerable algebraic detail in constructing the models.
Those wishing to verify the correctness of the results recorded herein should download and examine our freely available technical report~\cite{MacKenzie11a} that details the precise computer algebra procedure.
Because the algorithm is based upon driving the residuals to zero, the critical aspect of the procedure is simply the correct coding of the computation of the residuals of the governing equations.
One may straightforwardly edit the code~\cite{MacKenzie11a} to construct holistic discretisations for other reaction-diffusion systems in the class~\eqref{Erde}.

\paragraph{The $\Ord{\gamma^3+\alpha^3}$ holistic discretisation}
\label{S_2d_hol}
Satisfying the \pde\ and \icc{} to residuals of~$\Ord{\gamma^3+\alpha^3}$ the computer algebra procedure~\cite[\S2.2]{MacKenzie11a} gives subgrid fields which are too complex to record here.
The corresponding evolution of the grid values on the slow manifold are
\begin{align}
\dotuij=&\frac{\gamma}{H^2}\bdelta^2 \uij +\alpha\left(\uij - \uij^3\right)
-\frac{\gamma^2}{12H^2}\bdelta^4 \uij
\nonumber\\&{}
+\alpha\gamma \frac1{12}\left\{
\left[3(\mu_i\delta_i U_{i,j})^2+\rat14(\delta_i^2 U_{i,j})^2\right](2+\delta_i^2)
+(\delta_i^2 U_{i,j})^2
\right.\nonumber\\&\qquad\left.{}
+\left[3(\mu_j\delta_j U_{i,j})^2+\rat14(\delta_j^2 U_{i,j})^2\right](2+\delta_j^2)
+(\delta_j^2 U_{i,j})^2
\right\} U_{i,j}
\nonumber\\&{}
+\Ord{\gamma^3+\alpha^3}\,,
\label{E_gl2d_g1a1}
\end{align}
where the operator difference $\delta=\shift ^{1/2}-\shift ^{-1/2}$ and mean $\mu=(\shift ^{1/2}+\shift ^{-1/2})/2$ in each of the two $i$~and~$j$ directions, and
where the bold centred difference operator applies in both spatial dimensions:
\begin{eqnarray}
\bdelta^2\uij&=&U_{i+1,j}+U_{i-1,j}+U_{i,j+1}+U_{i,j-1}-4\uij\,,
\label{eq:bdelta}\\
\bdelta^4\uij&=&U_{i+2,j}+U_{i-2,j}+U_{i,j+2}+U_{i,j-2}
\nonumber\\&&{}
-4(U_{i+1,j}+U_{i-1,j}+U_{i,j+1}+U_{i,j-1}) +12\uij\,.
\end{eqnarray}
The model~\eqref{E_gl2d_g1a1} is simply the extension to two spatial dimensions of the $\Ord{\gamma^3+\alpha^3}$ holistic model of the 1D Ginzburg--Landau \pde~\cite{MacKenzie05}.

The holistic discrete model has the dual justification of consistency with the \pde\ in addition to the justification provided by centre manifold theory.
As proven in Section~\ref{sec:nccec}, consistency for such discrete models follows from the coupling \icc~\eqref{EbcsdL}~\cite{Roberts00a}.
Set the coupling parameter $\gamma=1$ in the discrete equation~\eqref{E_gl2d_g1a1} to recover the holistic discrete model of the \glpde{} in 2D.
To test consistency, we expand the finite differences of~\eqref{E_gl2d_g1a1} in a Taylor series in the grid spacing~$H$~\cite[\S3.2]{MacKenzie11a} to find the equivalent continuum \pde{} for the $\Ord{\gamma^3+\alpha^3}$ holistic model~\eqref{E_gl2d_g1a1} is
\begin{equation}
\D tu=\nabla^2u+\alpha(u-u^3)
+\frac{H^2\alpha}{2}u|\nabla u|^2
- \frac{H^4}{90} \left( \Dn x6u+\Dn y6u\right)
+\Ord{\alpha^3+H^6}\,.
\label{E_gl2d_epde}
\end{equation}
The $\Ord{\gamma^3+\alpha^3}$ holistic model is $\Ord{H^4+\alpha^2}$ consistent, maintaining in 2D the dual justification of holistic discretisation found for 1D \pde{}s~\cite{Roberts98a, Roberts00a}.
Section~\ref{sec:nccec} proves this consistency in some generality for 2D \pde{}s~\eqref{Erde}.

\subsection{Illustration of the subgrid field in 2D}
\label{S_2D_perf}
Here we plot the subgrid fields for a coarse grid solution of the $\Ord{\gamma^2,\alpha^2}$~holistic model (obtained from~\eqref{E_gl2d_g1a1} by omitting the $\gamma^2$~term and then evaluating at $\gamma=1$) of the 2D \glpde.
We restrict attention to doubly  odd symmetric solutions that are also doubly $2\pi$-periodic:  that is,
\begin{equation}
 u(x,y,t)=\begin{cases}
-u(2\pi-x,y,t)=-u(x,2\pi-y,t),
&\text{odd symmetry,}
\\ u(x+2\pi,y,t)=u(x,y+2\pi,t),
&2\pi\text{-periodicity.}
\end{cases}
\label{E_gl_bc_po_2d}
\end{equation}
Figure~\ref{ch5fig3} shows the subgrid fields for the $\Ord{\gamma^2,\alpha^2}$~holistic model with $4\times4$ elements on $[0,\pi]\times[0,\pi]$ at nonlinearity $\alpha=6$.
The subgrid fields exhibit the nonlinear subgrid structure of the 2D~\glpde\ and its interaction through the \icc{}.
The subgrid fields are comprised of actual solutions, albeit approximate, of the 2D~\glpde.
\begin{figure}
 \centering
\includegraphics[width=\textwidth]{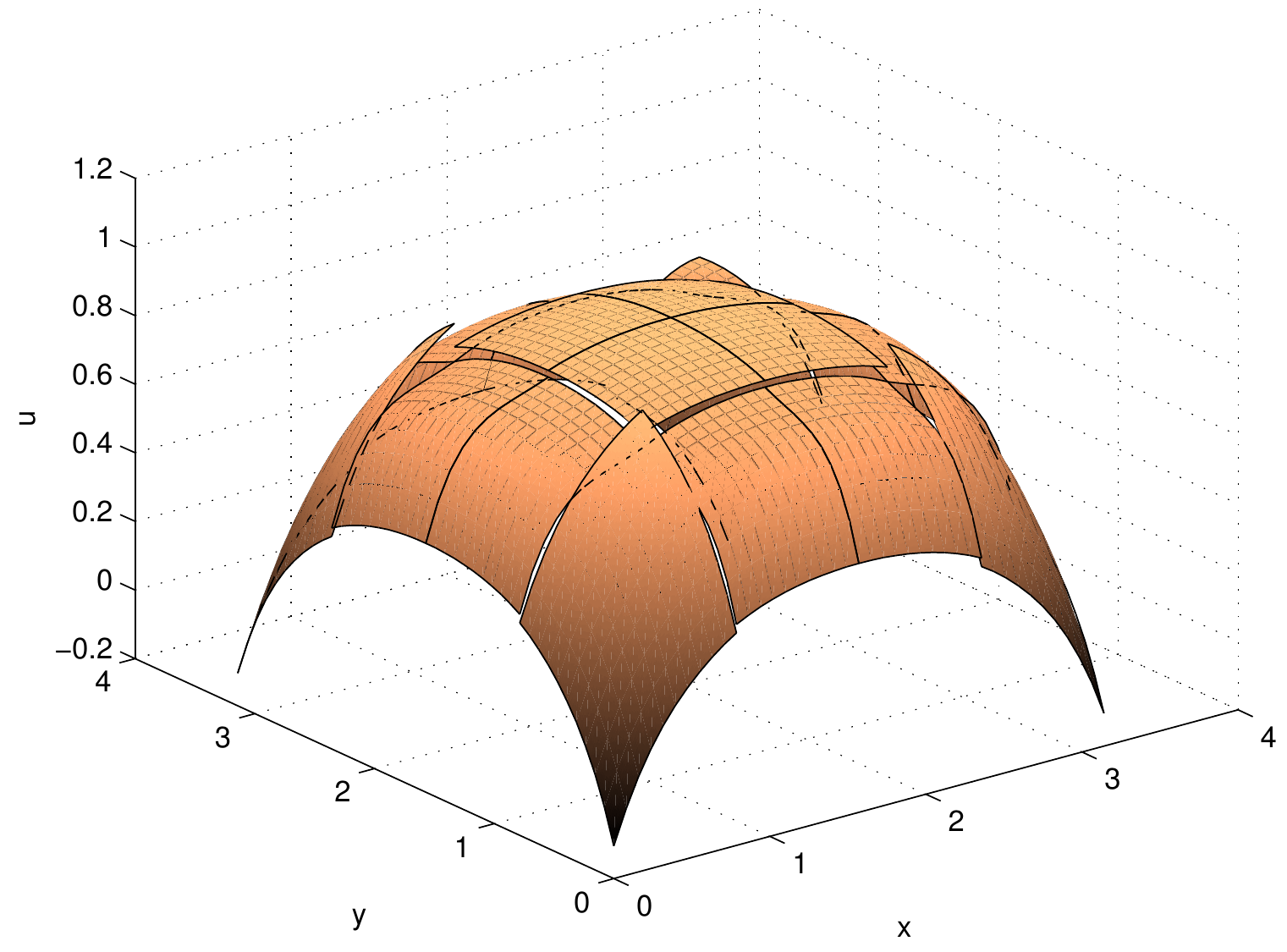}
\caption{an example of the subgrid field for the $\Ord{\gamma^2,\alpha^2}$~holistic
model with $4\times4$ elements on $[0,\pi]\times[0,\pi]$ at nonlinearity $\alpha=6$}
    \label{ch5fig3}
\end{figure}

Note the subgrid fields have small but noticeable jumps across the boundaries of the elements.
Higher order holistic models reduce these jumps across the boundaries as seen, for example, in the holistic models of the 1D \KS{} equation~\cite{MacKenzie05a}.

\paragraph{The $\Ord{\gamma^2,\alpha^2}$ holistic model needs improving}

We investigate the performance of the $\Ord{\gamma^2,\alpha^2}$ holistic model on coarse grids by comparing its bifurcation diagram to an accurate solution.
Again we restrict our attention to doubly odd symmetric solutions that are $2\pi$-doubly periodic~\eqref{E_gl_bc_po_2d}.

Continuation software \textsc{auto}~\cite{auto} and \textsc{xppaut}~\cite{xppaut} calculates this bifurcation information, as outlined  for the \KS{} equation in MacKenzie's PhD dissertation~\cite{MacKenzie05}.
In such bifurcation diagrams the blue curves indicate stable steady state solutions and red curves indicate unstable steady state solutions.
The open squares indicate steady state bifurcations.


Underlying Figure~\ref{ginz2d_num_8_bif_2}, in grey for reference, is an accurate bifurcation diagram of the 2D \glpde.
It is constructed with a computationally expensive, fourth order, centered difference, approximation with $24\times24$ points on $[0,\pi]\times[0,\pi]$.
The trivial solution undergoes steady state bifurcations at $\alpha=2,8,18$ leading to the unimodal, bimodal and trimodal branches respectively.
For nonlinearity ranging over $1<\alpha<30$, only the unimodal branch is stable; all other branches are unstable.

\begin{figure}
\centering
\includegraphics[width=\textwidth]{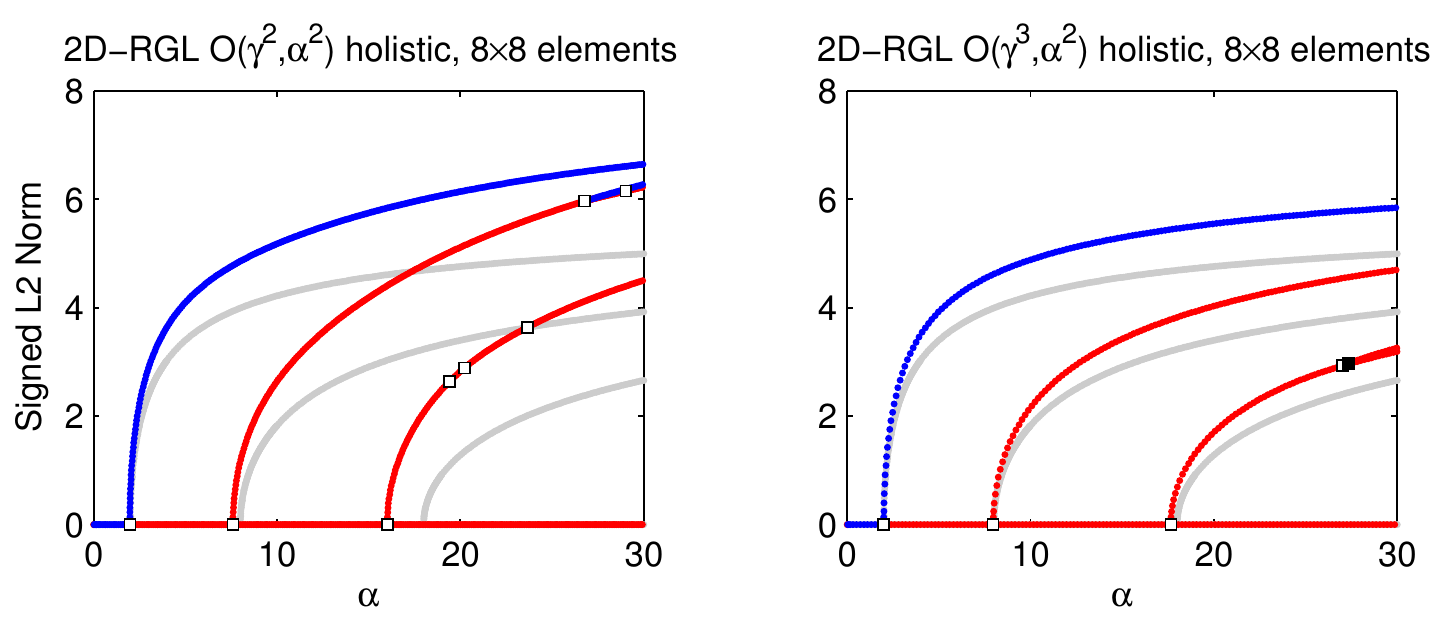}
\caption{Bifurcation diagrams for the 2D \glpde\ with $8\times8$ elements on $[0,\pi]\times[0,\pi]$ for holistic models (a)~${\cal O}(\gamma^2,\alpha^2)$  (b)~$\Ord{\gamma^3,\alpha^2}$.
The accurate bifurcation diagram is shown in grey.}
\label{ginz2d_num_8_bif_2}
\end{figure}

Figure~\ref{ginz2d_num_8_bif_2}(left) compares the bifurcation predictions of the ${\cal O}(\gamma^2,\alpha^2)$~holistic model with the exact bifurcation diagram (grey).  The predictions are reasonable for parameter~$\alpha$ up to about ten, and for the smaller range of amplitudes shown.  Figure~\ref{ginz2d_num_8_bif_2}(right) shows the significantly improved accuracy of the $\Ord{\gamma^3,\alpha^2}$~holistic model that is obtained by resolving more inter-element interactions through retaining higher orders in the coupling parameter~$\gamma$.  As proven for other \pde{}s in 1D~\cite{Roberts98a, MacKenzie03, MacKenzie05a}, higher order modelling in 2D evidently improves predictions.

%

\paragraph{Higher order models need numerical construction} 
To improve the accuracy of such discrete closures we need to compute higher orders in either coupling~$\gamma$, or nonlinearity~$\alpha$, preferably both.
Improved accuracy occurs at higher order in comparable 1D problems~\cite{MacKenzie05a}.
However, apparently it is not possible to \emph{analytically} construct higher order subgrid fields in 2D: the subgrid fields required for our closures appear to be no longer in the class of multivariate polynomials.
For example, the $\Ord{\gamma^3,\alpha^2}$~model used for Figure~\ref{ginz2d_num_8_bif_2}(right) is not immediately obtainable from the computer algebra iteration~\cite{MacKenzie11a}.
Instead, numerical methods must be used to find the subgrid fields as described in Section~\ref{chapnumcm}.
Section~\ref{sec:patch} employs the same numerical methods to also construct slow manifold models on patches.
However, the well known `solvability condition' in asymptotic mathematical methods empowers us to derive the next order in the evolution, analytically from the residuals, without needing to find the next order of the subgrid fields, and hence provides the model underlying Figure~\ref{ginz2d_num_8_bif_2}(right).

\subsection{The adjoint provides an extra order of accuracy}
\label{sec:apeoa}

We scrounge an extra order of accuracy from the `solvability condition'~\cite[e.g.]{Pavliotis07} applied to residuals of the next asymptotic order.
Because the linear operator used to find corrections to the subgrid field is singular---the operator necessarily has homogeneous solutions that compose the slow subspace~$\mathbb E_0$---the Fredholm alternative is that the `right-hand side' of the equation for the subgrid fields must lie in the range of the singular operator.
This solvability condition is enough to determine an extra correction to the evolution.  

Recall from elementary linear algebra that to be in the range of the operator, the solvability condition is that the right-hand side must be orthogonal to the null space of the adjoint operator.
Thus the first task of this section is to find the adjoint operator of the linear constant diffusion \pde~\eqref{eq:ccdpde} with its insulating boundary conditions~\eqref{eq:ccdbc}.
These non-obvious adjoints have never been identified before.
Second, we find a basis for the null space.
Lastly, computer algebra computes the required extra order in the evolution.

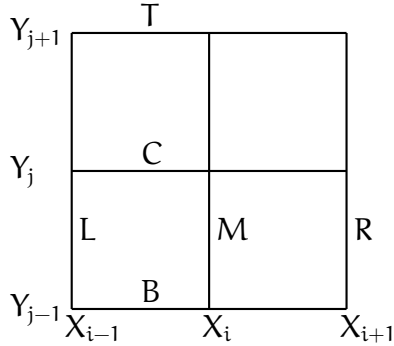
\begin{figure}
\centering
\setlength{\unitlength}{1ex}
\begin{picture}(27,25)(0,0)
\thicklines
\put(-1,-2){
\multiput(5,5)(10,0){3}{\line(0,1){20}}
\multiput(5,5)(0,10){3}{\line(1,0){20}}
\put(5.5,10){\put(0,0){$L$}\put(10,0){$M$}\put(20,0){$R$}}
\put(10,5.5){\put(0,0){$B$}\put(0,10){$C$}\put(0,20){$T$}}
\put(4.5,3){\put(0,0){$X_{i-1}$}\put(10,0){$X_i$}\put(20,0){$X_{i+1}$}}
\put(0.5,4.5){\put(0,0){$Y_{j-1}$}\put(0,10){$Y_j$}\put(0,20){$Y_{j+1}$}}
}
\end{picture}
\caption{each element is effectively divided into four subregions by the non-locality of the boundary conditions~\eqref{eq:ccdbc}.
To derive the adjoint, label the edges of these four subregions as shown.}
\label{fig:radjoint}
\end{figure}

The decoupling of the elements, provided by $\gamma=0$ in the boundary conditions~\eqref{eq:ccdbc}, simplifies finding the adjoint: we need only consider each element in isolation.
Thus define the inner product to be the integral over the $i,j$th~element:
\begin{displaymath}
\left<u,v\right>=\iint_{E_{i,j}}uv\,dx\,dy
=\int_{Y_{j-1}}^{Y_{j+1}} \int_{X_{i-1}}^{X_{i+1}} uv\,dx\,dy \,.
\end{displaymath}
To find the adjoint recognise that each element is effectively subdivided into four subregions by the coupling of the boundary values to internal values by the boundary conditions~\eqref{eq:ccdbc}, Figure~\ref{fig:radjoint} schematically shows these four regions.
In addition, there exists previously implicit conditions that the subgrid field~$u$ and its gradient are continuous throughout the element.
Then integration by parts, or the divergence theorem, transforms the inner product
\begin{align*}
\left<\delsq u,v\right>&
=\left<u,\delsq v\right>
-\int_L u_xv-uv_x\,dy  +\int_R u_xv-uv_x\,dy
\\&\qquad{} 
+\int_{M-} u_xv-uv_x\,dy -\int_{M+} u_xv-uv_x\,dy
\\&\qquad{}+\text{analogous integrals on $B$, $C$~and~$T$,}
\end{align*}
where specific parts of the boundary integrals are labelled as shown in Figure~\ref{fig:radjoint}.
Using superscripts to denote evaluation, continuity requires $u^{M\pm}=u^M$ and $u_x^{M\pm}=u_x^M$\,, and the boundary conditions~\eqref{eq:ccdbc} imply $u^L=u^M=u^R$\,.
Thus the inner product
\begin{align*}
\left<\delsq u,v\right>
&=\left<u,\delsq v\right>
+\int_{Y_{j-1}}^{Y_{j+1}} \Big[-u_x^Lv^L+u^Mv_x^L  +u_x^Rv^R-u^Mv_x^R
\\&\qquad{} 
+u_x^Mv^{M-} -u^Mv_x^{M-} -u_x^Mv^{M+} +u^Mv_x^{M+}
\Big] dy
\\&\qquad{}+\text{(analogous $x$~integrals of $y$~derivatives)}
\\&=\left<u,\delsq v\right>
+\int_{Y_{j-1}}^{Y_{j+1}} \Big[-u_x^Lv^L +u_x^Rv^R 
+u_x^M\left(v^{M-}-v^{M+}\right)
\\&\qquad{} 
+u^M\left(v_x^L-v_x^R -v_x^{M-} +v_x^{M+}\right)
\Big] dy
\\&\qquad{}+\text{(analogous $x$~integrals of $y$~derivatives).}
\end{align*}
For the adjoint, these integrals on the right-hand side must vanish for all smooth fields~$u$.
Consequently, the null space of the adjoint operator satisfies Laplace's equation~$\delsq v=0$ with conditions: firstly, that $v$~is zero around the edges~$L$, $R$, $B$~and~$T$ of the element; secondly, that $v$~is continuous on the interior partitions $M$~and~$C$ (but its gradients may be discontinuous there); thirdly, that $v_x^L -v_x^{M-} +v_x^{M+}-v_x^R=0$\,; and lastly, that $v_y^B -v_y^{C-} +v_y^{C+}-v_y^T=0$\,.
Because of these conditions, the null space of the adjoint is spanned by the `pyramid' $v=(1-|x-X_i|/H)(1-|y-Y_i|/H)$ as displayed in Figure~\ref{fig:zadjoint}.

\begin{figure}
\centering
\setlength{\unitlength}{1mm}
\begin{picture}(110,78)(0,0)
\put(2,0){\includegraphics{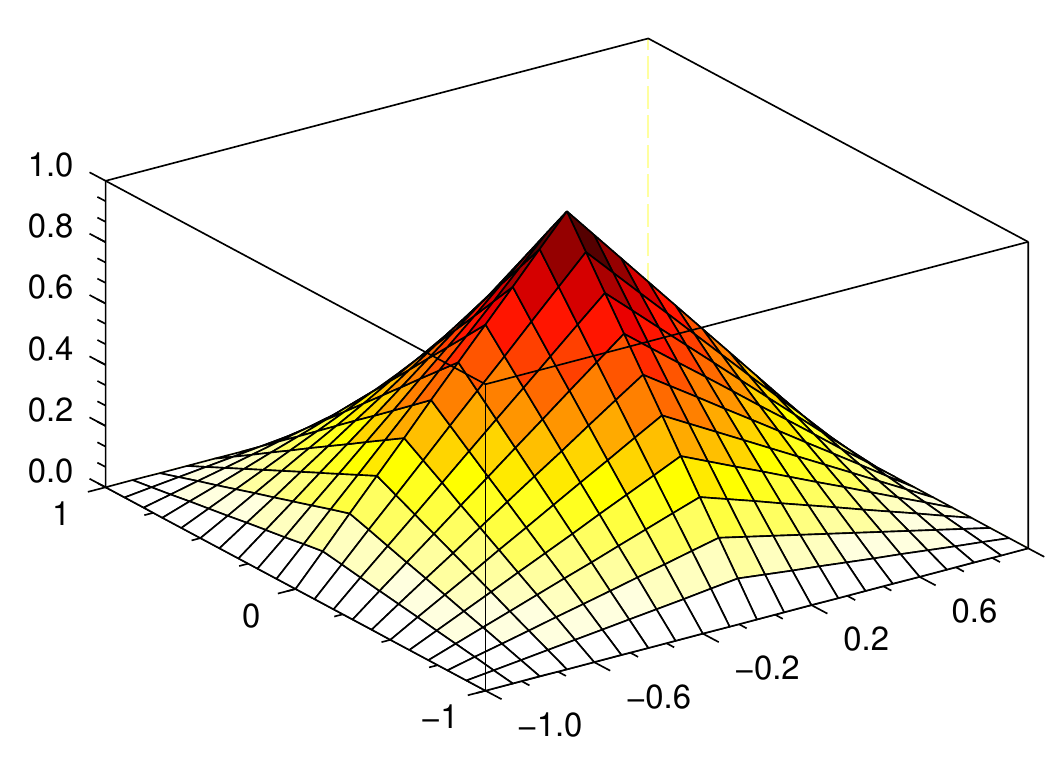}}
\put(22,10){$\eta$}
\put(87,5){$\xi$}
\put(0,44){$v$}
\end{picture}
\caption{basis `pyramid'  for the null space of the adjoint operator on an element: $v=(1-|\xi|)(1-|\eta|)=(1-|x-X_i|/H)(1-|y-Y_i|/H)$.}
\label{fig:zadjoint}
\end{figure}

The solvability condition is then that the integral of the subgrid residuals of the \pde\ with this~$v$ over the $i,j$th~element determines a correction to the model evolution.  

It will not escape your notice that the solvability condition integral parallels integrals in the Galerkin finite element method.
Thus view the Galerkin finite element method as a leading approximation to our systematic slow manifold closure of discrete modelling.

Analogous arguments derive the adjoint for patch dynamics.
The version of boundary conditions~\eqref{eq:ccdbc} for patches refers to the grid value of the field on the right-hand side.
With care integrating over a patch, the linear constant diffusion operator on the right-hand side of \pde~\eqref{eq:ccdpde} has adjoint
\begin{equation}
\delsq v+\left(\int_{\partial E_{i,j}}\D nv\,ds\right)\delta_{\text{Dirac}}(x-X_i,y-Y_j)\,,
\quad \text{such that }
v=0\text{ on }\partial E_{i,j}\,.
\label{eq:patadj}
\end{equation}
The point source at the central grid point makes up for the diffusive loss of material across the edge of the patch to lead to a null vector of the adjoint with a logarithmic singularity at the grid point.
We have not yet used this interesting adjoint for patch dynamics.

\paragraph{Return to the discrete Ginzburg--Landau model}
Computer algebra readily computes the subgrid residuals of the \glpde{} to the next higher order (the code is detailed elsewhere~\cite[\S2.3]{MacKenzie11a}).
Taking the inner product with the adjoint null vector~$v$, and remembering contributions from the inter-element coupling conditions~\eqref{EbcsdL}, gives the next higher order discrete model.
%
Because of the faithful resolution of subgrid structures and inter-element interactions, the resulting discrete models are algebraically extremely complicated.
Thus users may prefer, as they often do now, to use models of the nonlinear dynamics of lower order.
Then higher order discretisations derived via this approach provide local estimates of the local error in a lower order simulation as it is computed on the fly.

\section{Generally compute multi-dimensional subgrid fields numerically}
\label{chapnumcm}

Here the subgrid field of the slow manifold is constructed numerically for the example \glpde{} in~2D.
The approach generalises straightforwardly to a wide range of nonlinear \pde{}s in multiple dimensions, such as the general reaction-diffusion \pde~\eqref{Erde}.

New complexities arise.
Although the spatial structure is obtained numerically, the subgrid field must be also parametrised by the grid values~$\uij$, the inter-element\slash patch coupling parameter~$\gamma$, and the nonlinear parameter~$\alpha$.
Therefore,  the construction involves symbolic parameters.
The algorithm required to develop the holistic model must efficiently solve the corresponding mixed numerical and symbolic problem.
Only then will the dynamical systems methodology be able to be usefully employed in modelling general \pde{}s such as~\eqref{Erde}.
\emph{The focus of this section is on this novel combined algebraic\slash numerical construction of the subgrid fields.} 

Numerical construction of the subgrid field introduces errors which are separate from the orders of errors in approximating the slow manifold.
These errors from the numerical construction of the holistic discretisation are a major concern.
The numerical construction of the subgrid field and its evolution has challenging details: \S\ref{S_num_2D_ext},~How should the subgrid problem be solved?  \S\ref{S_num_2D_perf},~What subgrid resolutions will accurately reproduce the analytical holistic models?  \S\ref{S_num_eff},~What is an efficient implementation?

This section does not compare the effectiveness of the holistic modelling with traditional approaches, that effectiveness has previously been reasonably established in 1D~\cite[e.g.]{MacKenzie05a, Roberts98a, Roberts04d}.  Instead this section focusses on developing the methodology needed to use the approach for more challenging \pde{}s in multiple dimensions and in the challenge of supporting the potential efficiencies of the gap-tooth scheme of the next section~\ref{sec:patch}.

\subsection{Outline the numerical slow manifold in 2D}
\label{S_num_2D_ext}
In each element\slash patch we discretise the microscale subgrid as shown in Figure~\ref{fig:2D_sub}.
Whereas previously we sought the field~$u_{i,j}(x,y,t)$ in the $i,j$th~element, here we seek the microscale grid values in the $i,j$th~element\slash patch.  Consequently, here we index the subgrid by variables $k$~and~$\ell$ where $|k|,|\ell|<n$\,. For example Figure~\ref{fig:2D_sub} shows the case $n=4$\,; the subgrid is shown solid (blue\slash magenta) for this particular example of a $9\times9$ subgrid.
The subgrid field extends to $X_{i}\pm rH$~and~$Y_{j}\pm rH$ to apply either of the 2D non-local \icc~\eqref{EbcsdL} or~\eqref{eq:epcc}.
At the $k,\ell$th~subgrid point of the $i,j$th~element\slash patch we define the microscale grid value~$u_{i,j,k,\ell}(t)$ for subgrid indices $|k|,|\ell|<n$\,.  In this section we invoke centre manifold theory to analyse the microscale evolution of~$u_{i,j,k,\ell}(t)$ in order to model the macroscale dynamics.
 
\begin{figure}
\begin{center}
\footnotesize \setlength{\unitlength}{0.23em}
\begin{picture}(80,80)
\multiput(10,0)(20,0){4}{
  \multiput(0,0.5)(0,3){27}{\line(0,1){2}}}
\multiput(0,10)(0,20){4}{
  \multiput(0.5,0)(3,0){27}{\line(1,0){2}}}
\multiput(20,20)(20,0){3}{
\multiput(0,0)(0,20){3}{\circle*{2}} } 
\put(12,15){$i-1,j-1$}
\put(34,15){$i,j-1$} \put(52,15){$i+1,j-1$} \put(15,35){$i-1,j$}
\put(38,35){$i,j$} \put(55,35){$i+1,j$} \put(12,55){$i-1,j+1$}
\put(34,55){$i,j+1$} \put(52,55){$i+1,j+1$}
\put(30,6){\vector(1,0){20}}
\put(50,6){\vector(-1,0){20}}
\put(40,3){$H$}
\color{blue}
\multiput(20,20)(5,0){9}{\line(0,1){40}}
\multiput(20,20)(0,5){9}{\line(1,0){40}}
\end{picture}
\quad
\begin{picture}(80,80)
\multiput(10,0)(20,0){4}{
  \multiput(0,0.5)(0,3){27}{\line(0,1){2}}}
\multiput(0,10)(0,20){4}{
  \multiput(0.5,0)(3,0){27}{\line(1,0){2}}}
\multiput(20,20)(20,0){3}{
\multiput(0,0)(0,20){3}{\circle*{2}} } 
\put(0,-2){
\put(12,15){$i-1,j-1$}
\put(34,15){$i,j-1$} \put(52,15){$i+1,j-1$} \put(15,35){$i-1,j$}
\put(38,35){$i,j$} \put(55,35){$i+1,j$} \put(12,55){$i-1,j+1$}
\put(34,55){$i,j+1$} \put(52,55){$i+1,j+1$}
}
\put(30,6){\vector(1,0){20}}
\put(50,6){\vector(-1,0){20}}
\put(40,3){$H$}
\color{magenta}
\newcommand{\patch}{
\multiput(0,0)(1,0){9}{\line(0,1){8}}
\multiput(0,0)(0,1){9}{\line(1,0){8}}
}
\multiput(16,16)(20,0){3}{
\multiput(0,0)(0,20){3}{\patch} } 
\end{picture}
\end{center}
\caption{Example of the $n=4$ subgrid on 2D elements (left) and patches (right): a subgrid labelled as ``$n=4$'' extends to be $9\times9$ as a microscale subgrid extends to~$\pm n$ on all sides of a macroscale grid point.}
\label{fig:2D_sub}
\end{figure}
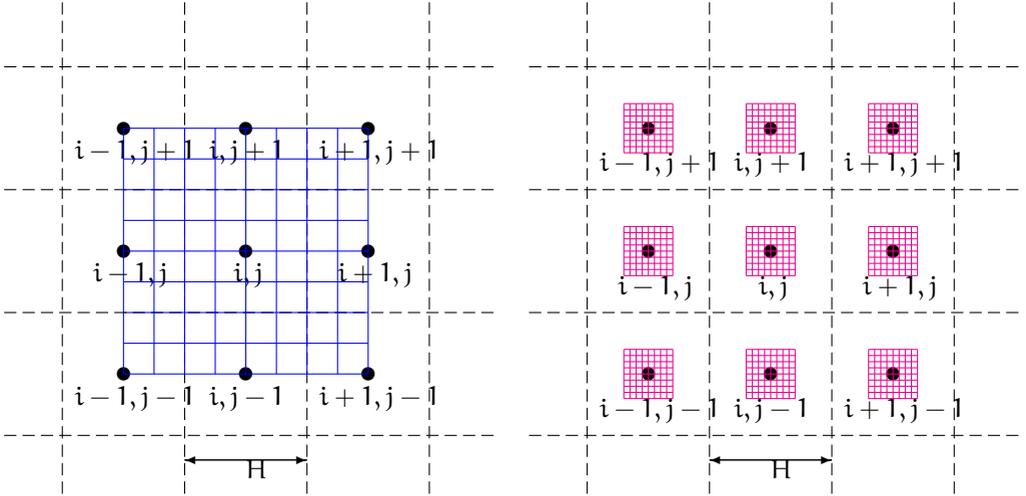

After discretising the subgrid microscale field in the $i,j$th~element\slash patch, classic finite differences approximate the spatial derivatives of the subgrid field of the \glpde:
\begin{align}
\dot u_{i,j,k,\ell}
=&\frac{u_{i,j,k+1,\ell}+u_{i,j,k-1,\ell}+u_{i,j,k,\ell+1}+u_{i,j,k,\ell-1}
-4u_{i,j,k,\ell}}{(rH/n)^2} 
\nonumber\\&{}
+\alpha(u_{i,j,k,\ell}-u_{i,j,k,\ell}^3).
\label{eq:nde}
\end{align}
These microscale discretised equations are solved at each of the subgrid points inside each of the elements.
The overlapping elements are coupled with \icc\ corresponding to~\eqref{EbcsdL}, namely
\begin{equation}
\begin{cases}
u_{i,j,\pm n,\ell}=\gamma u_{i\pm1,j,0,\ell}+(1-\gamma)u_{i,j,0,\ell}\,,
& |\ell|<n\,,
\\
u_{i,j,k,\pm n}=\gamma u_{i,j\pm1,k,0}+(1-\gamma)u_{i,j,k,0}\,,
& |k|<n\,,
\end{cases}
\label{eq:nibcy}
\end{equation}
or in the case of disjoint patches by \icc\ corresponding to~\eqref{eq:epcc}, namely
\begin{equation}
\begin{cases}
u_{i,j,\pm n,\ell}=\shift _i^{\pm r}(\gamma)\shift _j^{r\ell/n}(\gamma) u_{i,j,0,0}\,,
& |\ell|<n\,,
\\
u_{i,j,k,\pm n}=\shift _i^{rk/n}(\gamma)\shift _j^{\pm r}(\gamma) u_{i,j,0,0}\,,
& |k|<n\,.
\end{cases}
\label{eq:nibcp}
\end{equation}
Equations~\eqref{eq:nde}--\eqref{eq:nibcy}/\eqref{eq:nibcp} form a system on a microscale lattice.
The same centre manifold theorems apply to the system~\eqref{eq:nde}--\eqref{eq:nibcy}/\eqref{eq:nibcp} to assure us of the existence, relevance and construction of a slow manifold, macroscale discrete model of the lattice dynamics.
Indeed the application of centre manifold theory is more straightforward as the microscale lattice system is finite dimensional, in contrast to the `infinite' dimensionality of a microscale \pde.
The macroscale slow manifold to construct is that the subgrid field $u_{i,j,k,\ell}=u_{i,j,k,\ell}(\vec U,\gamma,\alpha)$ where, defining the grid value $\uij=u_{i,j,0,0}$\,, the macroscale grid values~$\vec U$ evolve according to the system $\dotuij=g_{i,j}(\vec U,\gamma,\alpha)$.

An iteration scheme finds the microscale subgrid field and the macroscale slow evolution.
We leave the case of patches until Section~\ref{sec:patch} and here focus upon the case of overlapping elements.
The initial approximation is that of a constant field in each element: $u_{i,j,k,\ell}\approx \uij$ such that $\dotuij=g_{i,j}\approx 0$\,.
Given any current approximation, $\vec u_{i,j}$~and~$g_{i,j}$, we seek an improved approximation to the field, $\vec{u}_{i,j}+\vec  u_{i,j}'$, and evolution, ${g}_{i,j}+g_{i,j}'$, where $\vec u_{i,j}'$~and~$g_{i,j}'$ are corrections to be found in each iteration.
At each iteration, and in the case of overlapping elements, the following linear equations driven by the current residuals are solved for the corrections~$\vec{u}_{i,j}'$ and~$g_{i,j}'$
\begin{align}
&\left(\frac n{rH}\right)^2\left(u'_{i,j,k+1,\ell}+u'_{i,j,k-1,\ell}
+u'_{i,j,k,\ell+1}+u'_{i,j,k,\ell-1}
-4u'_{i,j,k,\ell}\right) -g_{i,j}'=\Res_{\ref{eq:nde}}\,,
\nonumber\\&
u'_{i,j,\pm n,\ell}-u'_{i,j,0,\ell}=\Res_{\ref{eq:nibcy}}\,,\quad
u'_{i,j,k,\pm n}-u'_{i,j,k,0}=\Res_{\ref{eq:nibcy}}\,,\quad
u'_{i,j,0,0}=0\,.
\label{E_glnum_eqs}
\end{align}
The iteration repeats until all residuals are zero to a specified order of error.
This iteration scheme follows that for the analytic construction of the subgrid field and is documented in full detail in a separate technical report~\cite[\S4]{MacKenzie11a}.
Centre manifold theory then assures us that the resultant macroscale discrete model $\dotuij =g_{i,j}(\vec U,\gamma,\alpha)$ is accurate to the same order of error.

For example, for the coarsest possible subgrid field, the case $n=2$ when the microscale subgrid within each element has half the spacing of the macroscale grid, the lattice dynamics on the microscale subgrid gives the low order, macroscale model
\begin{align}
\dotuij =&\frac{\gamma}{H^2}\bdelta^2 \uij +\alpha\left(\uij - \uij^3\right)
\nonumber\\&{}
-\frac{\gamma^2}{16H^2}\bdelta^4 \uij
+\alpha \gamma \left(\rat1{16} \bdelta^2\uij^3-\rat3{16} \uij^2 \bdelta^2\uij\right)
+\Ord{\gamma^3+\alpha^3}\,,
\label{E_num_gl_g1r1_2g}
\end{align}
Compare this model with the analytic macroscale discrete model~\eqref{E_gl2d_g1a1}: the terms in the first line are identical; the same higher order terms appear in the second line but the coefficients differ by~$25$\%.
This correspondence is promising for such a coarse microscale subgrid discretisation.

Recall that this approach models the dynamics on a microscale lattice.
Thus this approach transforms microscale lattice dynamics onto a macroscale lattice.
Such transformation across scales of lattices may be repeated across an entire hierarchy of lattices to form \emph{dynamics} on a multigrid~\cite{Roberts08c}.
The consistency theorems of section~\ref{sec:nccec} provide additional suuport for the modelling across a hierarchy of lattices.

\subsection{Low resolution subgrids are accurate in 2D}
\label{S_num_2D_perf}

How do macroscale models constructed via a numerical microscale, such as~\eqref{E_num_gl_g1r1_2g}, compare with analytic macroscale models? We compare in two ways: one via the convergence of the coefficients; and the other by the accuracy of the predicted bifurcation diagrams.
It appears that the microscale subgrid need not be of high resolution.

\begin{table}
\caption{coefficients in the~$\Ord{\gamma^3+\alpha^3}$ models, such as~\eqref{E_num_gl_g1r1_2g}, evidently converge to the correct analytic coefficients, in~\eqref{E_gl2d_g1a1} and labelled~$\infty$ in the table, with errors~$\Ord{1/n^2}$ as the resolution of the microscale grid improves.}
\label{tbl:num_gl_g1r1_2g}
\begin{center}
\begin{tabular}{ccccc}
$n$ & subgrid &
$\gamma^2\bdelta^4\uij/H^2$ & 
$\alpha\gamma\bdelta^2\uij^3$ & 
$\alpha\gamma\uij^2\bdelta^2\uij$ \\ \hline
\vphantom{$\Big|$}
$2$ & $5\times5$ & $-\frac1{16}$ & $\frac1{16}$ & -$\frac3{16}$ \\
\vphantom{$\Big|$}
$4$ & $9\times9$ & $-\frac5{64}$ & $\frac5{64}$ & -$\frac{15}{64}$ \\
\vphantom{$\Big|$}
$8$ & $17\times17$ & $-\frac{21}{256}$ & $\frac{21}{256}$ & -$\frac{63}{256}$ \\
\hline
\vphantom{$\Big|$}
$\infty$ && $-\frac1{12}$ & $\frac1{12}$ & -$\frac1{4}$ \\
\hline
\end{tabular}
\end{center}
\end{table}

First look at the coefficients of the~$\Ord{\gamma^3+\alpha^3}$ models such as~\eqref{E_num_gl_g1r1_2g}.
Recall that the number of microscale subgrid points, from one macroscale grid point to the next, in each dimension, is~$n$.
The coefficients linear in the coupling parameter~$\gamma$ and nonlinearity~$\alpha$ are exact for all $n\geq 2$\,, only the higher order coefficients vary with subgrid resolution.
Table~\ref{tbl:num_gl_g1r1_2g} tabulates coefficients in these nonlinear terms, those of~$\Ord{\gamma^2+\alpha^2}$ in models such as~\eqref{E_num_gl_g1r1_2g}, for some values of~$n$.
Evidently the coefficients converge to the continuum \pde\ values with error~$\Ord{1/n^2}$.
We expect such quadratic convergence from the quadratic  approximation in~\eqref{eq:nde} of the subgrid scale \pde\ dynamics.

\begin{table}
\caption{maximum errors in the coefficients of the $\Ord{\gamma^4+\alpha^4}$ model when approximated numerically at three different subgrid resolutions.
The decrease by at least a factor of four, upon doubling~$n$, indicates quadratic convergence.}
\label{tbl:me2dgl3}
\begin{center}
\begin{tabular}{llllll}
$n$ & $\gamma^2/H^2$ & $\gamma\alpha$ & $\gamma^3/H^2$ & $\gamma^2\alpha$ & $\gamma\alpha^2H^2$ \\ \hline
$2$ & $0.021 $ & $0.062 $ & $0.0033 $ & $0.14 $ & $0.0016   $ \\
$4$ & $0.0052$ & $0.016 $ & $0.00086$ & $0.040$ & $0.000098 $ \\
$8$ & $0.0013$ & $0.0039$ & $0.00022$ & $0.010$ & $0.0000061$ \\ \hline
\end{tabular}
\end{center}
\end{table}

Similarly, we compare numerically obtained coefficients for the $\Ord{\gamma^4+\alpha^4}$ model.
Because of the complexity of the model we make a limited comparison: for each order in $\alpha$~and~$\gamma$, Table~\ref{tbl:me2dgl3} reports the largest error in the numerically obtained coefficient for three different subgrid scale resolutions.
Evidently, these maximum errors decrease like~$1/n^2$ to confirm the accuracy of the numerical description of the subgrid scale dynamics. 


\begin{figure}
\centering
\includegraphics[width=\textwidth]{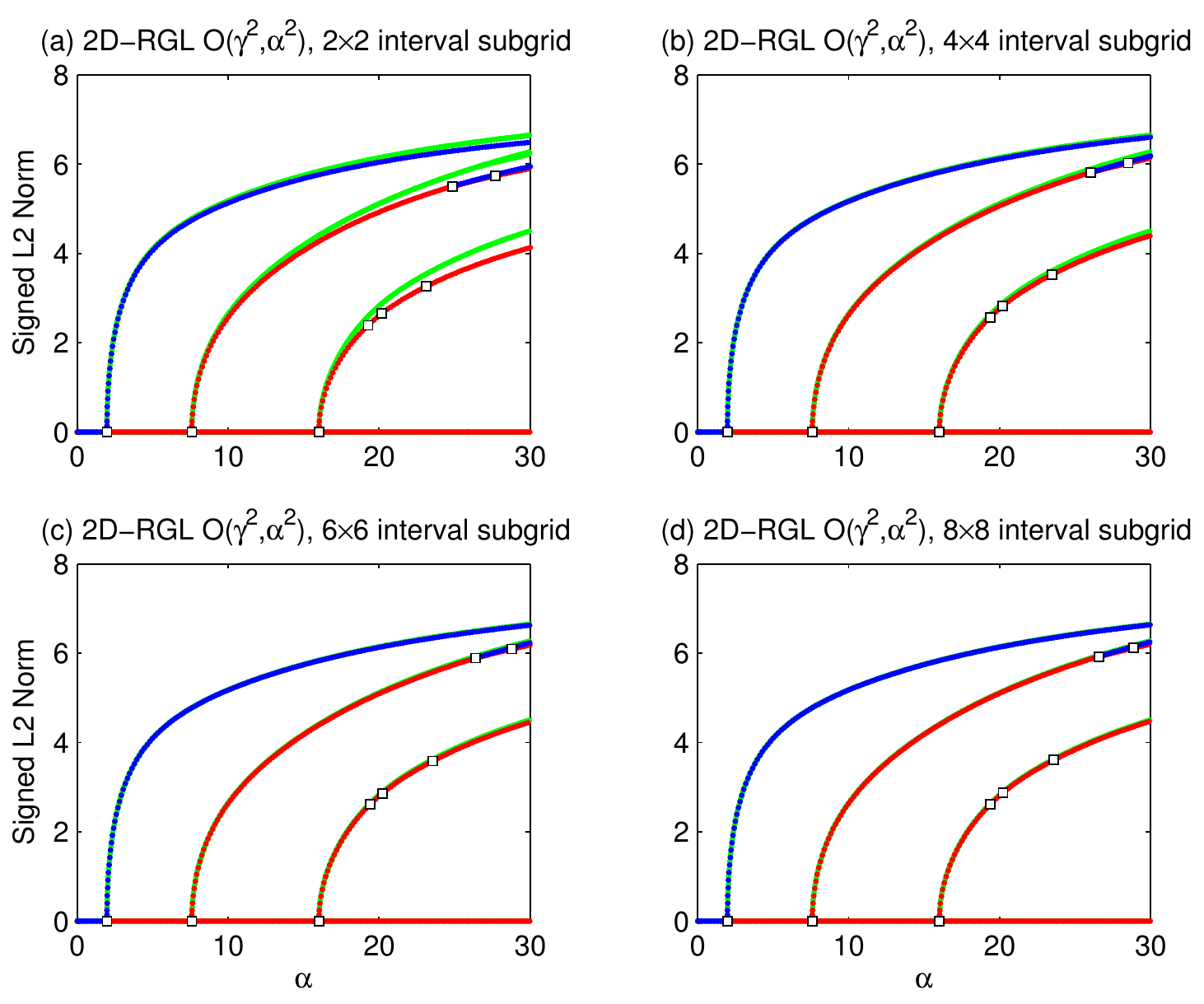}
\caption{Bifurcation diagrams of the $O(\gamma^2,\alpha^2)$ holistic
models of the 2D Ginzburg--Landau system with $8\times 8$ macroscale
elements on $[0,\pi]\times[0,\pi]$ for subgrids of
(a)~$5\times5$, (b)~$9\times9$, (c)~$13\times13$ and (d)~$17\times17$.
The bifurcation diagram for the analytically constructed
model is shown in green.}
\label{ginz2d_num_g1r1}
\end{figure}

Second, we turn to the bifurcation diagram to see the sort of errors incurred in using the approximate models.
Figure~\ref{ginz2d_num_g1r1} shows the bifurcation diagrams for the $\Ord{\gamma^2,\alpha^2}$ holistic model of the Ginzburg--Landau system for four subgrid resolutions.
Here the equilibria shown in green are not the accurate solution of the Ginzburg--Landau system, but rather the equilibria of the analytic $\Ord{\gamma^2,\alpha^2}$ holistic model in 2D (obtained from~\eqref{E_gl2d_g1a1} by omitting the $\gamma^2$~term).
Observe that with a subgrid resolution of just $4\times4$ intervals the bifurcation diagram for the numerically constructed $\Ord{\gamma^2,\alpha^2}$ holistic model is almost indiscernible from the analytic model over nonlinearity $0\le\alpha\le20\,$.
Higher subgrid resolutions are indiscernible to even larger nonlinearity~$\alpha$.

As a last comparison of bifurcation diagrams, Figure~\ref{ginz2d_num_8_bif_2} shows one example confirming  that by computing to higher order in the inter-element interactions, and resolving the subgrid scale structures numerically, the predictions of the numerically derived models do improve significantly. 

Numerically resolving the microscale subgrid structures does generate usefully accurate, slow manifold, macroscale, discrete models.

\subsection{An efficient computer algebra approach is crucial}
\label{S_num_eff}

The difficulty associated with the numerical construction of the subgrid field is the mixed numerical and symbolic nature of the equations involved in the iteration scheme.
The size of the system of equations increases as the subgrid resolution improves, and the complexity of the symbolic nonlinear residuals increases quickly as higher order holistic models are constructed.
However, these comments apply to, and this subsection only addresses, the pre-simulation preprocessing step of constructing the slow manifold discretisations: the computation time of the derived macroscale closure on the macroscale domain is as normal for the method of lines, and is independent of the considerations here.

Computer algebra packages such as Reduce~\cite{Reduce04} or Mathematica~\cite{Wolfram96} have general routines that solve systems of equations such as~\eqref{E_glnum_eqs}.
However, these \verb|solve| routines are inefficient for the many equations and complicated expressions here.
Even a low resolution, $n=2$ subgrid, took many minutes using the built in \verb|solve| routines of both Reduce and Mathematica.
Instead we develop an approach that is practical for implementation with a large number of complicated symbolic terms.  Nonetheless, different computer algebra packages execute this preprocessing step in a time that differs by an order of magnitude: users need to know that so far we find Reduce~\cite{Reduce04} to be the most efficient.

\paragraph{Transform to constant coefficient}
Recall that at each step of the iteration scheme we solve a problem for updates to the subgrid field $\vec{u}'_{i,j}$ and its evolution~$g_{i,j}'$.
Multiply the first (field) equation in~\eqref{E_glnum_eqs} by~$r^2H^2$ and replace~$g'_{i,j}$ by $\mathcal G'=r^2H^2g'_{i,j}$\,.
Then the left-hand side of the new form of the equations has numerical constant coefficients; algebraic expressions only occur in the right-hand side.

Further, the left-hand side of the new equations remain the same for every iteration.
Consequently, the first iteration constructs an \textsc{lu}~factorisation of the left-hand side, which is then used to solve equation~\eqref{E_glnum_eqs} for updates in every iteration~\cite[\S4]{MacKenzie11a}.
The \textsc{lu}~factorisation is performed once and requires approximately $\frac13 N^3$~operations~\cite[e.g.]{Press92}.
Here the number of equations for the subgrid structure are $N=(2n+1)^2+1$\,; for example, $N=290$ for the $n=8$ microscale subgrid.
At each step of the iteration scheme the \textsc{lu}~factorisation algorithm operates on the symbolic residual vector.
We suspect 2D and 3D problems would be solved more efficiently through sparse methods such as iterative multigrid~\cite{mccormick92, brigg00} or incomplete \textsc{lu}~factorisation and Krylov subspace methods~\cite{kelley95, vorst03}.
Such alternatives remain for later exploration.

\begin{table}
\caption{Reduce and Mathematica computational times for numerical construction of $\Ord{\gamma^4,\alpha^2}$ holistic models of the one dimensional Ginzburg--Landau equation for various subgrid scale resolutions,~$n$.}
\label{table_comptime_red}
\begin{center}
\begin{tabular}{rrr}
$n$&Reduce&Mathematica\\
\hline
$2$&$1.1$\,s&$70.2$\,s\\
$4 $&$3.1$\,s&$215.4$\,s\\
$8 $&$8.3$\,s&$367.6$\,s\\
$16 $&$23.7$\,s&$517.7$\,s\\
\hline
\end{tabular}
\end{center}
\end{table}

\paragraph{Reduce was much faster}
Computational experiments found that the computer algebra package Reduce was at least an order of magnitude faster than Mathematica.
Table~\ref{table_comptime_red} lists the computational time for the Reduce and the Mathematica implementation for constructing $\Ord{\gamma^4,\alpha^2}$~holistic models of the \emph{one dimensional} Ginzburg--Landau equation with subgrid resolutions of~$2$, $4$, $8$ and~$16$ subgrid intervals.
These times were observed on a Pentium~III, 750\,MHz processor, with 256\,Mb~\textsc{ram}, running Reduce~3.7, under Windows~XP.
Table~\ref{table_comptime_red} shows the Reduce implementation was $20$--$70$ times faster than the Mathematica implementation  (even with the repeated help of the Mathematica news group).
Thus we use the free package Reduce~\cite{Reduce04}.


\section{The slow manifold of emergent patch dynamics}
\label{sec:patch}

Now return to the gap-tooth dynamics on patches such as the simulations of the \glpde\ shown in Figures~\ref{fig:Matlab/early5-4-8-12-10D/0}--\ref{fig:Matlab/early5-4-8-12-10D/1}.  The challenge is to deduce by analysis some of the important properties of such `equation free' simulation so we understand its performance in applications.  

This section shows how the emergent patch dynamics may be viewed as a slow manifold closure that is also consistent with the underlying microscale model.  Hence, for example, patch dynamics could explore the interesting domain competition inherent in the Ginzburg--Landau \pde.

\paragraph{Subgrid microscale modes form very rapid transients}

Figures~\ref{fig:Matlab/early5-4-8-12-10D/0}--\ref{fig:Matlab/early5-4-8-12-10D/1} sample the evolution from a non-smooth initial condition of the \glpde.  The transients are the rapid diffusive smoothing of the initial jagged microscale structures within each spatial patch.  Thereafter we would see the slow macroscale evolution of domain competition, underpinned by smooth microscale structures within each patch.  Thus in the long term we see relatively slow evolution of smooth structures in each patch.

\begin{figure}
\centering
\includegraphics{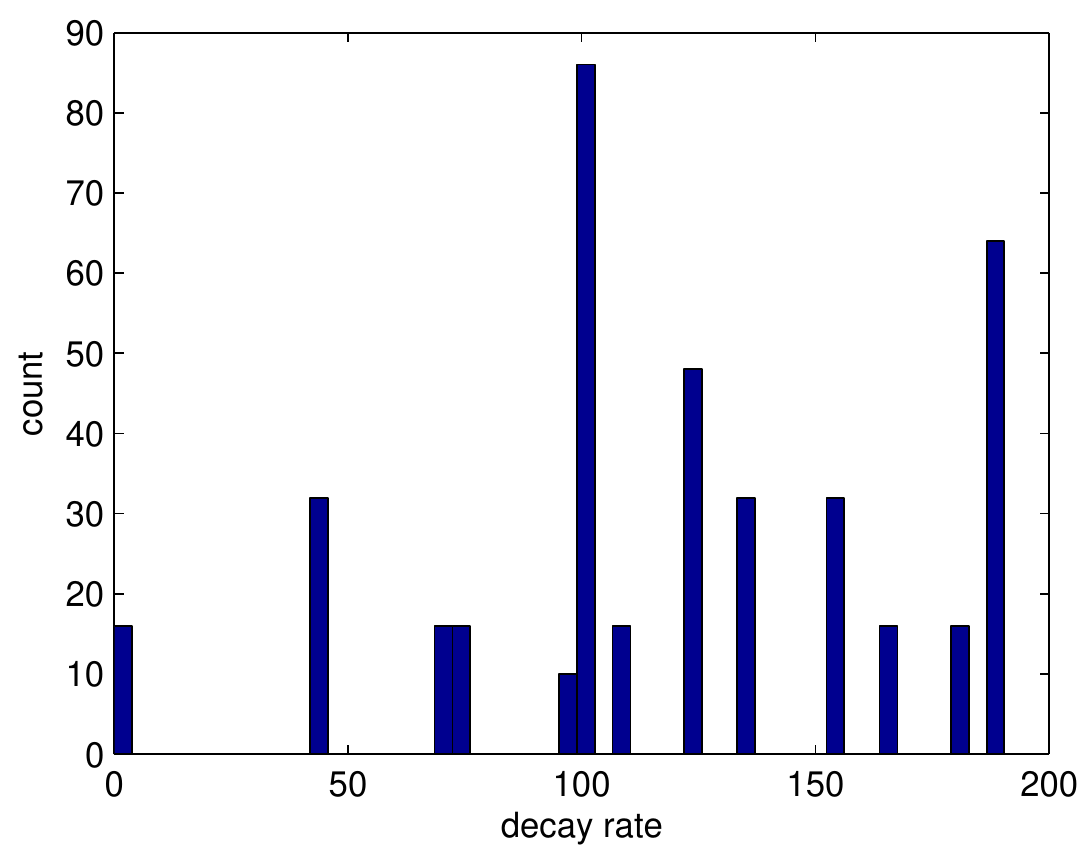}
\caption{histogram of the number of eigenvalues of linear diffusion--- $\alpha=0$ in the \glpde---showing clear time scale separation between the $16$~global modes with small eigenvalues, and the multitude of microscale modes decaying at rates${}>40$.  Here the gap ratio $r=1/4$ and macroscale spacing $H=2$.}
\label{fig:patcheigenv}
\end{figure}

Eigenanalysis of linear diffusion---$\alpha=0$ in the \glpde---confirms this separation of modes into rapidly decaying subgrid modes and long lasting macroscale modes.  For example, consider doubly periodic diffusion on $4\times 4$ patches on a macroscale grid with spacing $H=2$ with each patch composed of a $5\times 5$ microscale grid.  Couple patches with the \icc~\eqref{eq:epcc}.  Then numerical differentiation of the code that simulates Figures~\ref{fig:Matlab/early5-4-8-12-10D/0}--\ref{fig:Matlab/early5-4-8-12-10D/1} generates the matrix of the diffusion dynamics on these patches.  Figure~\ref{fig:patcheigenv} plots a histogram of the eigenvalues of this matrix.  Most eigenvalues are large and negative corresponding to the rapid diffusive decay of subpatch structures.  However, $16$~eigenvalues are near zero, one for each of the $16$~patches.  These near zero eigenvalues characterise long lasting, emergent, global modes.

\begin{figure}
\centering
\includegraphics{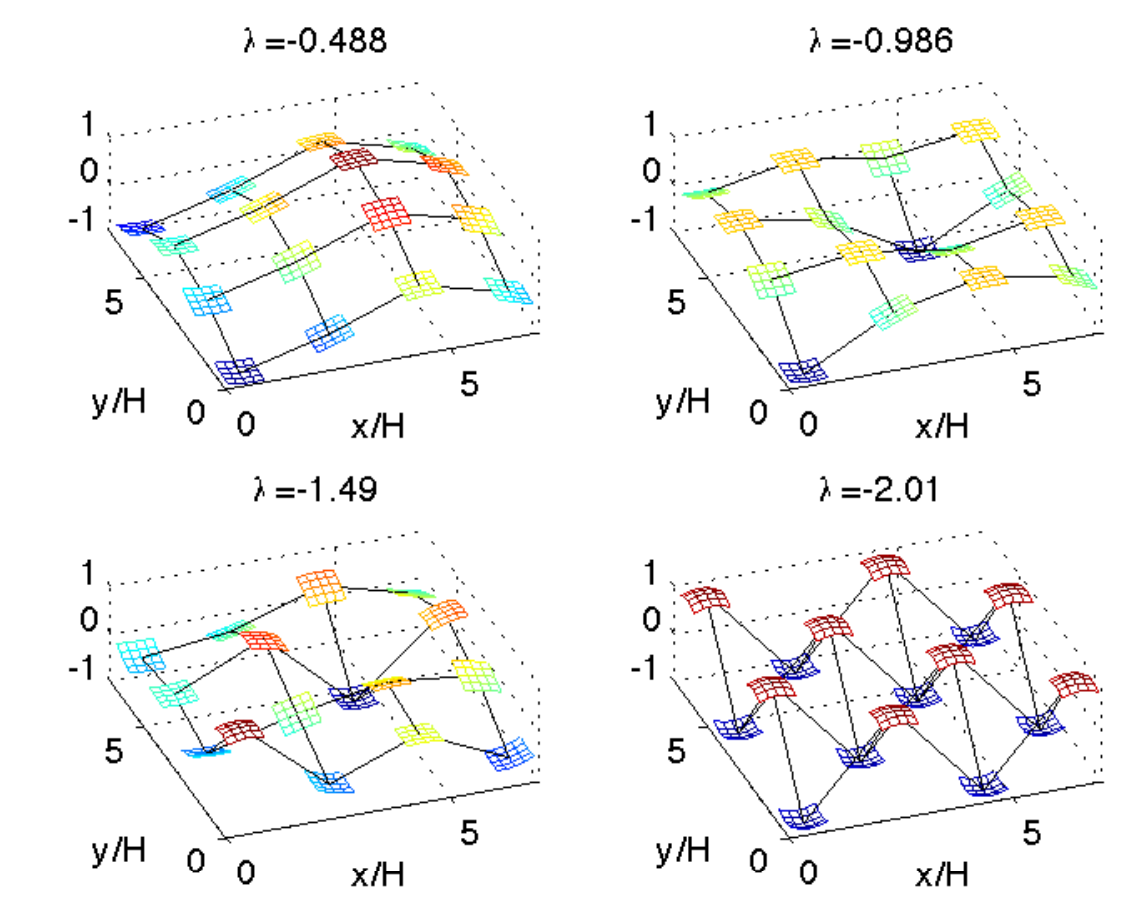}
\caption{some global eigenmodes labelled by their eigenvalue.  Here the ratio $r=1/4$ and macroscale spacing $H=2$.  For comparison, a second order, finite difference scheme with corresponding grid spacing $H=2$ would have eigenvalues $\lambda=-1/2,-1,-3/2,-2$: the shown macroscale eigenvalues correspond well with these finite difference ones.}
\label{fig:patcheigenm}
\end{figure}

Figure~\ref{fig:patcheigenm} confirms the smooth global modes.  Not discernible in the eigenvalues of Figure~\ref{fig:patcheigenv} is that the small eigenvalues form four main groups.   Figure~\ref{fig:patcheigenm} plots a representative eigenmode from each of these four groups: from the near~$\sin x$ mode of $\lambda=-0.471$ to the macroscopic zig-zag but microscopic smooth mode of $\lambda=-2.05$\,.   This linear analysis confirms the basis for theoretical support of nonlinear patch dynamics for general reaction-diffusion \pde{}s~\eqref{Erde}.

\paragraph{Centre manifold theory supports macroscale closure}
Section~\ref{S_2D_divide} introduces~$\gamma$ to parametrise the coupling between patches: $\gamma=1$ corresponds to the coupling used in `equation free' simulation, but when $\gamma=0$ the eigenvalues are perturbed so that the small eigenvalues, such as those in Figure~\ref{fig:patcheigenv}, become precisely zero.  Then Corollaries~\ref{cor:exist} and~\ref{cor:rc} establish the existence of emergent patch dynamics as a slow manifold at finite coupling~$\gamma$ and finite nonlinearity~$\alpha$ for general reaction-diffusion \pde{}s~\eqref{Erde}.

\paragraph{Patch dynamics are consistent with the microscale}
Section~\ref{chapnumcm}  describes how to construct slow manifold discretisations of general reaction-diffusion \pde{}s~\eqref{Erde}.  For example, consider the \glpde\ when discretised inside patches by a $(2n+1)\times(2n+1)$ microscale lattice. Using the patch \icc~\eqref{eq:epcc} computer algebra computes the slow manifold discretisation is
\begin{align}
\dotuij={}&
\frac\gamma{H^2}\bdelta^2\uij
-\frac{\gamma^2}{12H^2}\big(1-\rat{r^2}{n^2}\big)\bdelta^4\uij
+\frac{\gamma^3}{90H^2}\left(1-\rat{r^2}{n^2}\right)\left(1-\rat{r^2}{4n^2}\right)\bdelta^6\uij
\nonumber\\&{}
+\alpha(\uij-\uij^3)
+\alpha\gamma r^2c_n\left[ \bdelta^2(\uij^3)-3\uij^2\bdelta^2\uij\right]
\nonumber\\&{}
+\Ord{\alpha^2+\gamma^4},
\label{eq:glsm9}
\end{align}
for operator~$\bdelta$ defined by~\eqref{eq:bdelta}.  The nonstandard discretisation of the cubic reaction, appearing via the $\alpha\gamma$~term above, is obtained from a systematic approximation of the subgrid, microscale, out of equilibrium, structures: its coefficient varies with the $(2n+1)\times(2n+1)$ microscale lattice as
\begin{equation*}
c_n\to 0.0758188\text{ as }n\to\infty\,, 
\qtq{and}
c_n\approx \frac{751-739/n^2}{6(1651+191/n^2)}
\end{equation*}
exactly for $n=2,3,4$ and to four significant digits for $n=5:8$\,.  Evaluated at full coupling $\gamma=1$ corresponding to the `equation free' gap-tooth simulation, the discretisation~\eqref{eq:glsm9} is consistent with the microscale lattice discretisation of the \glpde.  Replacing the differences in~\eqref{eq:glsm9} by their expansion in derivatives, the equivalent \pde\ to~\eqref{eq:glsm9} is
\begin{align}
\D tU={}&
\nabla^2U
+\frac{H^2r^2}{12n^2}\left(\Dn x4U+\Dn y4U\right)
+\frac{H^4r^4}{360n^4}\left(\Dn x6U+\Dn y6U\right)
\nonumber\\&{}
+\alpha(U-U^3)
+{\alpha H^2r^26c_n}{}U\left[\Big(\D xU\Big)^2+\Big(\D yU\Big)^2\right]
\nonumber\\&{}
+\Ord{\alpha^2+H^6}.
\label{eq:glsm9pde}
\end{align}
This \pde\ is not the \glpde\ because the underlying dynamics are those of the microscale discretisation: instead this \pde\ is equivalent to that of the microscale discretisation on the fine lattice which has spacing~$rH/4$.   Such consistency of macroscale patch dynamics with the fine scale dynamics is established for a range of \pde\ operators in Theorem~\ref{thm:ipc} of the next Section~\ref{sec:nccec}.

Lastly, these patch dynamics connect to classic finite difference discretisations.   Fix the macroscale grid spacing~$H$, but let the patch size and the microscale grid spacing become small via the ratio $r\to0$\,.  Then the slow manifold model~\eqref{eq:glsm9} reduces to the classic finite difference approximation of the \glpde: the equivalent \pde~\eqref{eq:glsm9pde} is the \glpde\ as patch size $r\to0$.  Using classic interpolation to provide coupling conditions~\eqref{eq:epcc} for tiny patches is equivalent to classic finite differences of the microscale dynamics.  Thus when we apply the `equation free' gap-tooth method on simulators for which we do not know the microscale equations, we will nonetheless obtain a macroscale simulation which is consistent with the unknown microscale equations.

Furthermore, this consistency of the macroscale simulation holds no matter how small the patches, here parametrised by the ratio~$r$.  The constraints on the macroscale grid will be those familiar to anyone using classic finite difference or finite element approximations: namely, the macrogrid step~$H$ must be small enough to resolve the macroscale variations.  But the patch size can be vanishingly small making the gap-tooth method extremely efficient for those systems with extremely wide separation of scales.

\section{Non-local coupling conditions enforce consistency}
\label{sec:nccec}

Recall that the constructed holistic models of the \glpde\ are consistent with the \pde\  as the grid size $H\to0$\,, see equations \eqref{E_gl2d_epde} in Section~\ref{S_2D_low}.
Now we prove that general consistency follows from the specific choice of nonlocal inter-element\slash patch coupling conditions~\eqref{EbcsdL} and~\eqref{eq:epcc}.

We start with a similar theorem to one previously proved for the consistency of holistic discretisation in one space dimension~\cite{Roberts00a}.
The critical innovation here is in the proof: previous proofs were constructive whereas here it is not.
Avoiding a constructive proof is essential here as we do not know analytic forms for the slow manifold subgrid fields in multiple dimensions.
In this new proof: a new lemma establishes consistency for nonlinear reaction-diffusion \pde{}s in~$1$D; and an immediate new corollary then proves consistency of the 2D~holistic discretisation of a wide range of 2D~nonlinear, reaction-diffusion \pde{}s.

\begin{theorem}[1D linear consistency] \label{thm:1dc}
Consider the \pde\ $\partial_tu=\cL u$ for some local, isotropic, homogeneous, linear operator~$\cL$.
Model the dynamics on overlapping elements of an equi-spaced grid $X_i=iH$\,.
Let $u_i(x,t)$~denote the subgrid field in the $i$th~element satisfying the \pde\ $\partial_tu_i=\cL u_i$ on the interval~$(X_{i-1},X_{i+1})$ with the moderated inter-element coupling conditions
\begin{equation}
u_i(X_{i\pm1},t)=\gamma u_{i\pm1}(X_{i\pm1},t)+(1-\gamma)u_i(X_i,t)\,.
\label{eq:1dibc}
\end{equation}
When inter-element interactions are truncated to residuals~$\Ord{\gamma^p}$ the grid values $U_i(t)=u_i(X_i,t)$, at full coupling $\gamma=1$\,, evolve consistently with the \pde\ $\partial_tu=\cL u$ to an order in grid size~$H$ that increases with~$p$. 
\end{theorem}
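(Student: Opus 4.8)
The plan is to reduce the statement to a single-wavenumber computation and then track how each power of~$\gamma$ improves the order in~$H$, without ever constructing the subgrid fields~$u_i$ explicitly. First I would exploit the linearity, homogeneity and isotropy of~$\cL$: these make both the grid and the operator translation- and reflection-invariant, so it suffices to test consistency on a single Fourier eigenmode $u(x,t)=e^{ikx+\lambda(k)t}$, where $\lambda(k)$ is the (real, even) symbol defined by $\cL e^{ikx}=\lambda(k)e^{ikx}$. Consistency of the discrete model to order~$H^q$ then becomes the statement that its discrete symbol $G(k,\gamma)$, defined through $\dot U_i=G(k,\gamma)U_i$ on the mode $U_i=\hat U e^{ikX_i}$, matches $\lambda(k)$ with relative error $\Ord{(kH)^q}$ as $kH\to0$ (up to the fixed power of~$1/H$ set by dimensional balance).

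The key structural observation is that the single global mode $u_i(x,t)=e^{ikx+\lambda(k)t}$, taken identically in every element, solves the subgrid \pde\ $\partial_tu_i=\cL u_i$ exactly and, being globally smooth, satisfies the coupling conditions~\eqref{eq:1dibc} \emph{exactly at full coupling} $\gamma=1$. Hence this mode lies on the untruncated slow manifold of Corollary~\ref{cor:exist} at $\gamma=1$, and the exact discrete evolution reproduces the \pde\ with no error: $G(k,1)=\lambda(k)$. Since the slow manifold is built as a power series in the coupling, I would write $G(k,\gamma)=\sum_{n\ge1}G_n(k)\gamma^n$; truncating the inter-element interactions to residuals~$\Ord{\gamma^p}$ retains the partial sum $G^{[p]}(k)=\sum_{n<p}G_n(k)$, and by the Approximation Corollary~\ref{cor:rc} the truncation error at $\gamma=1$ is exactly $G(k,1)-G^{[p]}(k)=\sum_{n\ge p}G_n(k)$.

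It then remains to show that each coefficient $G_n(k)$ vanishes as $kH\to0$ at a rate that grows with~$n$, and here I would track the coupling operator through the construction abstractly. The coupling enters~\eqref{eq:1dibc} only through~$\gamma$ times the cross-grid combination that, in the operator algebra~\eqref{eq:shifty}, is built from $\mu\delta$ and $\tfrac12\delta^2$; each additional power of~$\gamma$ in the iteration forces exactly one more factor of this coupling operator into the residual driving the next correction. On the eigenmode $\delta\mapsto 2i\sin(kH/2)=\Ord{kH}$ while $\mu\mapsto\cos(kH/2)=1+\Ord{(kH)^2}$, so every coupling factor contributes at least one power of~$kH$, forcing the leading order of $G_n(k)$ in~$kH$ to increase strictly with~$n$ (reflection symmetry of~$\cL$ in fact makes the odd $\mu\delta$ terms enter only in even combinations, sharpening this to two powers per order). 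Combining this with the exact relation $\sum_{n\ge1}G_n(k)=\lambda(k)$ shows the truncation error $\sum_{n\ge p}G_n(k)$ vanishes to an order in~$kH$ that grows at least linearly with~$p$, so the truncated model is consistent with $\partial_tu=\cL u$ to an order in~$H$ increasing with~$p$, as claimed. I expect this last order-count to be the main obstacle: the difficulty is making the ``one extra coupling factor per order in~$\gamma$'' bookkeeping rigorous using only locality, isotropy, homogeneity and linearity of~$\cL$, precisely because the subgrid fields are never available in closed form.
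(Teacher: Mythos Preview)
Your Fourier-mode approach is a genuinely different route from the paper's, and it is correct in broad outline, but the paper's argument is much more direct and entirely sidesteps what you correctly flag as your main obstacle.

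The paper proceeds by pure operator algebra on centred differences. Since $\cL$ is local, isotropic and homogeneous, expand it in even subgrid differences as $\cL=\ell_0+\ell(\delta_x^2)$ for some formal power series~$\ell$. With $\ell_2\neq0$ this inverts to rewrite the subgrid \pde\ as $\ell^{-1}(\partial_t-\ell_0)u_i=\delta_x^2u_i$. Now the key step: evaluate at $x=X_i$ and observe that the coupling conditions~\eqref{eq:1dibc} give \emph{exactly}
\[
\delta_x^2u_i\big|_{X_i}
=u_i(X_{i+1})+u_i(X_{i-1})-2u_i(X_i)
=\gamma(U_{i+1}+U_{i-1}-2U_i)
=\gamma\,\delta^2U_i,
\]
so the subgrid second difference at the grid point is the macroscale second difference times a single factor of~$\gamma$. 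Hence $\ell^{-1}(\partial_t-\ell_0)U_i=\gamma\delta^2U_i$, and reverting~$\ell$ yields the closed formula
\[
\partial_tU_i=\big[\ell_0+\ell(\gamma\delta^2)\big]U_i
=\sum_{k\ge0}\ell_{2k}\gamma^k\delta^{2k}U_i.
\]
This is your series $\sum_n G_n\gamma^n$ written out explicitly: $G_n=\ell_{2n}\delta^{2n}$, which on smooth fields is manifestly $\Ord{H^{2n}}$ up to the dimensional prefactor in~$\ell_{2n}$, so the truncation error at $\gamma=1$ is dominated by~$\ell_{2p}\delta^{2p}$.

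What this buys over your approach: the ``one extra coupling factor per order in~$\gamma$'' that you flagged as the main obstacle is not an estimate to be bookkept through an iteration but an exact algebraic identity read straight off~\eqref{eq:1dibc}; there is no need to invoke Corollaries~\ref{cor:exist}--\ref{cor:rc}, nor to argue that the global Fourier mode sits on the slow manifold. Your spectral picture is compatible---on the mode $e^{ikX_i}$ the paper's identity becomes $G(k,\gamma)=\ell_0+\ell\big(4\gamma\sin^2\tfrac{kH}{2}\big)$, which immediately gives both $G(k,1)=\lambda(k)$ and the order of each~$G_n$---but you are effectively re-deriving this by indirect order-counting instead of obtaining it in one line from the coupling. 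A minor point: your appeal to~\eqref{eq:shifty} conflates the patch coupling with the element coupling~\eqref{eq:1dibc}; the latter involves no fractional shifts, and the only operator content needed is the second-difference identity above.
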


\begin{proof}
We proceed with some classic operator algebra~\cite[e.g.]{npl61}.
The principle obstacle is to transform subgrid spatial differences, indicated by subscript~$x$, into macroscale grid differences, indicated without a subscript.
Begin with the \pde\ on the $i$th~element: $\partial_t u_i=\cL u_i$\,.
Because the operator~$\cL$ is isotropic and homogeneous it may be formally expanded in even centred differences as
\begin{displaymath}
\cL  =\sum_{k=0}^\infty \ell_{2k}\delta_x^{2k} =\ell_0+\ell(\delta_x^2)\,,
\end{displaymath}
for some coefficients~$\ell_{2k}$ and corresponding function~$\ell$.
For example, in application to reaction~diffusion \pde{}s, we expand the diffusion operator as\footnote{Such operator expansions and our formal operator manipulations appear to be little known these days, but they are well established~\cite[e.g.]{npl61, Hildebrand1987}.  The manipulations are valid for infinitely differentiable functions as appropriate to the diffusion-like \pde{}s considered here, but not applicable to systems generating shocks or other singularities that are not the subject of this theorem.  Modern analysis typically prefers to invoke Taylor's Remainder Theorem which avoids requiring infinite differentiability, but the aim here is to prove consistency to arbitrarily high order.}
\begin{equation*}
\partial_x^2=\ell(\delta_x^2)=\left[\frac2H\sinh^{-1}(\rat12\delta_x)\right]^2
=\frac1{H^2}\left[\delta_x^2-\rat1{12}\delta_x^4 
+\rat1{90}\delta_x^6 -\rat1{560}\delta_x^8+\cdots\right],
\end{equation*}
which has the inverse
\begin{equation*}
\delta_x^2=\ell^{-1}(\partial_x^2)=4\sinh^2(H\partial_x/2)
=H^2\partial_x^2 +\rat1{12}H^4\partial_x^4
+\rat1{360}H^6\partial_x^6 +\rat1{20160}H^8\partial_x^8 +\cdots\,.
\end{equation*}
In general, provided the leading coefficient $\ell_2\neq0$\,, the \pde\ $\partial_tu_i=[\ell_0+\ell(\delta_x^2)]u_i$ is equivalently written $\ell^{-1}(\partial_t-\ell_0)u_i=\delta_x^2 u_i$ where $\ell^{-1}$~is the inverse of function~$\ell$ (since $\ell_2\neq 0$\,, $\ell$~is invertible for small enough argument).
 
Evaluate this last form of the \pde\ at $x=X_i$ so that the $u_i$ on the left-hand side becomes simply the grid value~$U_i$ and by the coupling conditions~\eqref{eq:1dibc}\footnote{If the leading coefficient in the expansion of~$\cL$ is $\ell_{2n}\neq0$\,, because the lower order coefficients are zero (or asymptotically small as in the \KS\ \pde), then more coupling conditions like~\eqref{eq:1dibc} couple with the next nearer neighbouring elements.} the centred spatial difference on the right-hand side becomes the centred grid difference~$\gamma\delta^2U_i$\,.
This evaluation then gives the evolution to be $\ell^{-1}(\partial_t-\ell_0)U_i=\gamma\delta^2 U_i$ on the macroscale grid.  

Now reverting the inverse function, this grid evolution is equivalent to
\begin{equation}
\partial_t U_i=\big[\ell_0+\ell(\gamma\delta^2)\big]U_i
=\sum_{k=0}^\infty \ell_{2k}\gamma^k\delta^{2k} U_i\,.
\label{eq:dgis}
\end{equation}
For example, for the diffusion equation
\begin{align*}
\partial_tU_i&=\frac1{H^2}\left[2\sinh^{-1}\big(\rat12\sqrt\gamma\delta\big)\right]^2U_i
\\&{}
=\frac1{H^2}\left[\gamma\delta^2-\frac{\gamma^2}{12}\delta^4
+\frac{\gamma^3}{90}\delta^6 -\frac{\gamma^4}{560}\delta^8 +\cdots\right]U_i\,.
\end{align*}
Thus a truncation of~\eqref{eq:dgis} to errors~$\Ord{\gamma^p}$ results in a discrete model with stencil width of $2p-1$\,.
But specifically relevant to the theorem is the equivalent differential equation of this discrete model evaluated at full coupling:  for smooth macroscale~$U_i$, the error in approximating~$\cL$ by the truncated version of~\eqref{eq:dgis} (at full coupling $\gamma=1$) is dominated by the leading neglected term, namely $\ell_{2p}\delta^{2p}$.
As the element size $H\to 0$, this error is~$\Ord{\ell_{2p}H^{2p}}$ for smooth fields.
For example, for the diffusion operator~$\partial_x^2$, the coefficients $\ell_{2k}=\Ord{1/H^2}$ and so the discrete model is consistent with the diffusion \pde\ to error~$\Ord{H^{2p-2}}$ as grid size $H\to 0$\,.
\end{proof}

The above proof is so slick that one is tempted to immediately apply the identical arguments to nonlinear \pde{}s.
However, the stumbling block is that in the reversion of the operator~$\ell$ we need time derivatives and spatial differences to commute with nonlinearity.
Such commutation is not exact, only approximate, and leads to the following modification of the proof of consistency to nonlinear \pde{}s.

\begin{lemma}[nonlinear consistency]\label{lem:1dc}
  Consider the nonlinear, reaction diffusion \pde\ $\partial_tu=\cL u+g(u)$ for some local, isotropic, homogeneous, conservative, second order, linear operator~$\cL$ and some smooth nonlinear reaction~$g(u)$.
Model the dynamics on overlapping elements of an equi-spaced grid $X_i=iH$\,.
Let $u_i(x,t)$~denote the subgrid field in the $i$th~element satisfying the \pde\ $\partial_tu_i=\cL u_i+g(u_i)$ on the interval~$(X_{i-1},X_{i+1})$ with the moderated inter-element coupling~\eqref{eq:1dibc}.
When inter-element interactions are truncated to some nonlinear order in~$\gamma$, the grid values $U_i(t)=u_i(X_i,t)$, at full coupling $\gamma=1$\,, evolve consistently with the \pde\ $\partial_tu=\cL u+g(u)$. 
\end{lemma}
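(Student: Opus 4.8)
The plan is to reuse the operator-algebra reversion of Theorem~\ref{thm:1dc}, but to carry the reaction~$g(u_i)$ through the reversion as an inhomogeneous forcing and then to account separately for the terms generated by the failure of~$g$ to commute with the spatial differences and the time derivative. This is precisely the stumbling block already flagged: in the linear case $\partial_tu_i$ is again a solution, so $\ell^{-1}(\partial_t)$ may be applied freely, whereas here the nonlinearity breaks that iteration.

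First I would expand the conservative, second order operator in even centred differences, $\cL=\ell(\delta_x^2)$ with $\ell(0)=0$ and nonzero leading coefficient~$\ell_2$, so that $\ell$ is invertible for small argument. On any solution of $\partial_tu_i=\cL u_i+g(u_i)$ the operator identity $\cL u_i=\partial_tu_i-g(u_i)$ holds, and applying the functional inverse $\delta_x^2=\ell^{-1}(\cL)$ formally gives $\delta_x^2u_i=\sum_{m\ge1}b_m\cL^mu_i$ with $b_m$ the Taylor coefficients of~$\ell^{-1}$. I would then push each factor of~$\cL$ through using $\cL u_i=\partial_tu_i-g(u_i)$ together with the commutation of~$\cL$ with the time-independent~$\partial_t$: whenever a~$\cL$ lands on a $u_i$-factor it is traded for~$\partial_t$, whereas whenever it lands on a reaction factor it produces an irreducible spatial difference $\cL^bg(u_i)$. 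Collecting, this yields $\delta_x^2u_i=\ell^{-1}(\partial_t)u_i-\Phi_i$, where $\Phi_i$ is a sum of terms $\partial_t^a\cL^bg(u_i)$ weighted by products of the~$b_m$, with leading term~$b_1g(u_i)$.

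Next I would evaluate at $x=X_i$. The left-hand side collapses to $\gamma\delta^2U_i$ by exactly the coupling-condition computation of the linear proof, since~\eqref{eq:1dibc} turns $u_i(X_{i\pm1})$ into $\gamma U_{i\pm1}+(1-\gamma)U_i$; and the $\ell^{-1}(\partial_t)u_i$ piece collapses to $\ell^{-1}(\partial_t)U_i$ because $\partial_t$ and evaluation commute and $u_i(X_i)=U_i$. Reverting then gives the grid evolution $\partial_tU_i=\ell\big(\gamma\delta^2U_i+b_1g(U_i)+\text{corrections}\big)$. At full coupling $\gamma=1$ and as $H\to0$, the linear part reproduces~$\cL u$ exactly as in Theorem~\ref{thm:1dc}; and because $\ell(s)=\ell_2s+\Ord{s^2}$ with $b_1=1/\ell_2$, the leading reaction contribution is $\ell_2b_1g(U_i)=g(U_i)$, matching the reaction of the \pde.

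The hard part will be the correction series~$\Phi_i$ beyond its leading term, precisely the non-commutation already identified. I would argue these are negligible for consistency on two grounds. First, each correction $\partial_t^a\cL^bg(u_i)$ with $a+b\ge1$ is, for smooth fields, of higher order in~$H$: each extra spatial difference~$\cL$ carries a positive power of~$H$ once the reversion divides by $\ell_2=\Ord{1/H^2}$, while each~$\partial_t$ stays bounded via $\partial_t\sim\cL+g$. Second, each such correction vanishes on the piecewise-constant $\gamma=0$ field and so carries at least one factor of~$\gamma$, whence truncating the inter-element interactions to sufficiently high order in~$\gamma$ retains only finitely many corrections, all expressible in grid values through the coupling conditions~\eqref{eq:1dibc} and all subdominant to the retained reaction~$g(U_i)$. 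Establishing these two bounds rigorously---especially that the wide-stencil differences $\cL^bg$ evaluated through the overlapping-element coupling are genuinely $\Ord{H^2}$ relative to~$g$---is where the real work lies, but it suffices to conclude that at $\gamma=1$ the truncated grid evolution is consistent with $\partial_tu=\cL u+g(u)$.
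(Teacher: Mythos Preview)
Your overall strategy---carry $g$ through the operator reversion as forcing, then argue the non-commuting remainder is subdominant---is the same idea as the paper's. But your execution has a real gap at the reversion step, and the paper avoids it by a much simpler device.

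The problematic step is ``Reverting then gives $\partial_tU_i=\ell\big(\gamma\delta^2U_i+b_1g(U_i)+\text{corrections}\big)$.'' In the linear theorem the reversion works because both sides of $\ell^{-1}(\partial_t)U_i=\gamma\delta^2U_i$ are the action on~$U_i$ of a linear operator on the grid sequence, so one may legitimately write $\partial_t=\ell(\gamma\delta^2)$ as an operator identity. In your nonlinear version the right-hand side $\gamma\delta^2U_i+\Phi_i|_{X_i}$ is not the action of a single linear operator on~$U_i$; it is just a number. Writing $\ell$ of that number has no clear meaning, and in particular does not recover the higher iterated differences $\delta^{2k}U_i$ that $\ell(\gamma\delta^2)$ produces in the linear case. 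A related difficulty is that your higher corrections $\cL^b g(u_i)$, evaluated at~$X_i$, do not collapse to grid-value expressions through~\eqref{eq:1dibc} the way the single $\delta_x^2u_i|_{X_i}$ does: the coupling conditions only give you~$u_i$ at the element edges, not the wider stencil that $\cL^b$ demands.

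The paper sidesteps both issues by never attempting a full reversion. It rescales the operator as $\cL=\tfrac{1}{H^2}\ell(\delta_x^2)$ with $\ell_{2k}=\Ord{1}$, so that $H^2\partial_t$ is genuinely small, and then uses only the leading approximation $\ell^{-1}(X)=\tfrac{1}{\ell_2}X+\Ord{X^2}$ in both directions: $\ell^{-1}(H^2\partial_t)u_i=\tfrac{1}{\ell_2}H^2\partial_tu_i+\Ord{H^4}$, substitute the \pde, evaluate at~$X_i$, and then $H^2\partial_tU_i=\ell_2\,\ell^{-1}(H^2\partial_t)U_i+\Ord{H^4}$. Every remainder is bundled into an explicit $\Ord{H^4}$ with no need to enumerate or bound a tower of $\partial_t^a\cL^b g$ corrections, and the reaction emerges directly as~$g(U_i)$ rather than via $\ell_2b_1g(U_i)$. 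If you want to rescue your argument, the cleanest fix is precisely this: truncate $\ell^{-1}$ at its first term from the outset and carry a uniform $\Ord{H^4}$ error, rather than expanding in full and trying to control the series afterwards.
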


\begin{proof}
Slightly different to before, the operator~$\cL$ may be formally expanded in even centred differences as
\begin{displaymath}
\cL  =\frac1{H^2}\sum_{k=1}^\infty \ell_{2k}\delta_x^{2k} 
=\frac1{H^2}\ell(\delta_x^2)\,,
\end{displaymath}
for some coefficients~$\ell_{2k}$ and corresponding function~$\ell$.
But here: firstly, since $\cL$~is conservative, the coefficient $\ell_0=0$ as here the reaction~$g(u)$ absorbs any non-zero $\ell_0$; and secondly, since the operator~$\cL$ is second order, $\ell_{2k}=\Ord{1}$ as $H\to 0$\,; the main $H$~dependence is explicit in the above expression.
Noting the inverse $\ell^{-1}(X)=\frac1{\ell_2}X+\Ord{X^2}$ and the \pde\ in each element is $H^2\partial_tu_i=\ell(\delta_x^2)u_i+H^2g(u_i)$, we observe
\begin{align*}
\ell^{-1}(H^2\partial_t) u_i
&{}=\frac1{\ell_2}H^2\partial_t u_i+\Ord{H^4}
\\&{}=\frac1{\ell_2}\left[\ell(\delta_x^2) u_i +H^2g(u_i)\right]+\Ord{H^4}
\\&{}=\delta_x^2 u_i +\frac{H^2}{\ell_2}g(u_i)+\Ord{H^4},
\end{align*}
when the fields $u$~and~$u_i$ and reaction~$g$ are smooth enough so that their $(2k)$th~order differences are~$\Ord{H^{2k}}$ and time derivatives are~$\Ord{1}$ as $H\to0$\,.
Evaluate the above equation at the macroscale grid points $x=X_i$, using the coupling condition~\eqref{eq:1dibc}, to determine
\begin{equation}
\ell^{-1}(H^2\partial_t)U_i 
=\gamma\delta^2U_i 
+\frac{H^2}{\ell_2}g(U_i)+\Ord{H^4}.
\label{eq:1dgrid}
\end{equation}
Use the approximation to the operator~$\ell^{-1}$ to obtain
\begin{align*}
H^2\partial_t U_i
&{}=\ell_2\ell^{-1}(H^2\partial_t)U_i +\Ord{H^4}
\\&{}=\ell_2\left[\gamma\delta^2U_i 
+\frac{H^2}{\ell_2}g(U_i)\right]+\Ord{H^4} \quad\text{from~\eqref{eq:1dgrid}}
\\&{}=\ell_2\gamma\delta^2U_i +H^2g(U_i)+\Ord{H^4}
\\&{}=\ell(\gamma\delta^2)U_i +H^2g(U_i)+\Ord{H^4}.
\end{align*}
Thus, dividing by~$H^2$,
\begin{equation*}
\partial_tU_i=\frac1{H^2}\ell(\gamma\delta^2)U_i+g(U_i)+\Ord{H^2}
\end{equation*}
which, for any truncation of~$\Ord{\gamma^2}$ or higher and then evaluated at full coupling $\gamma=1$\,, is consistent as $H\to 0$ with the nonlinear reaction-diffusion \pde\ $u_t=\cL u+g(u)$.
\end{proof}

We expect that more careful treatment of the $\Ord{H^4}$~error terms in the above will show the error is of higher order in the grid size~$H$, as observed in all examples, but we do not attempt to do so here. 

In this section the subgrid microscale operator~$\cL$ need not be differential.
For example, $\cL$~could be a microscopic lattice operator as in the numerical construction of the previous Section~\ref{chapnumcm}: for example, with $n$~microscopic grid points for every macroscopic grid point we have the microscale lattice diffusion
\begin{equation*}
\delta^2_{\text{micro}}
=4\sinh^2\left[\rat12h\partial_x\right]
=4\sinh^2\left[\frac H{2n}\partial_x\right]
=4\sinh^2\left[\rat1n\sinh^{-1}\rat12\delta_x\right].
\end{equation*}
The proof still holds.
Further, the proof still holds if the subgrid field is not just a scalar.
This last observation empowers the following corollary on general consistency in two dimensions that Section~\ref{S_2D_low} observed specifically for the Ginzburg--Landau \pde.

\begin{corollary}[2D consistency] \label{thm:2dc}
Consider a reaction-diffusion \pde\ $\partial_tu=\nabla^2 u+g(u)$ (such as the Ginzburg--Landau equation~\eqref{E_gl2d}) modelled on overlapping elements with subgrid fields $u_{i,j}(x,y,t)$ coupled by conditions~\eqref{EbcsdL}.
When the interactions are truncated to some nonlinear order in~$\gamma$ the grid values $U_{i,j}(t)=u_{i,j}(X_i,Y_j,t)$, at full coupling $\gamma=1$\,, evolve consistently with the \pde. 
\end{corollary}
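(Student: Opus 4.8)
The plan is to deduce the two-dimensional statement from two successive applications of the nonlinear consistency Lemma~\ref{lem:1dc}, exploiting the tensor-product structure of the coupling conditions~\eqref{EbcsdL} together with the observation, recorded immediately after the proof of Lemma~\ref{lem:1dc}, that the argument is unchanged when the subgrid field is vector valued rather than scalar. Since the Laplacian splits as $\nabla^2=\partial_x^2+\partial_y^2$, each summand being a second order, conservative, isotropic, homogeneous linear operator in its own coordinate, I would treat the two directions one at a time.

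First I would freeze the $y$ dependence and regard the subgrid field $u_{i,j}(x,y,t)$ on element~$(i,j)$ as an $x$-dependent field taking values in a function space over~$y$. Then $\partial_x^2$ plays the role of~$\cL$ in Lemma~\ref{lem:1dc}, while the effective reaction is the combined source $G(u):=\partial_y^2u+g(u)$; crucially the added operator~$\partial_y^2$ is linear, hence as smooth as the lemma requires, so the hypotheses hold in this vector-valued setting. For each fixed height~$y$ the first line of~\eqref{EbcsdL} is exactly the moderated inter-element coupling~\eqref{eq:1dibc}, so the lemma's operator algebra converts the subgrid difference~$\delta_x^2$ into the macroscale grid difference~$\gamma\delta_i^2$. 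Writing $v_{i,j}(y,t):=u_{i,j}(X_i,y,t)$ for the field on the vertical midline, the lemma yields
\begin{equation*}
\partial_t v_{i,j}=\frac1{H^2}\ell(\gamma\delta_i^2)v_{i,j}+\partial_y^2v_{i,j}+g(v_{i,j})+\Ord{H^2},
\end{equation*}
where $\ell$ is the diffusion symbol of Lemma~\ref{lem:1dc} and $\delta_i^2$ acts on the column index.

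Second I would apply Lemma~\ref{lem:1dc} again, now in the $y$ direction to this reduced problem, taking $\cL=\partial_y^2$ and absorbing the already discretised $x$ operator into the effective reaction $\widetilde G(v):=\frac1{H^2}\ell(\gamma\delta_i^2)v+g(v)$. The added piece $\frac1{H^2}\ell(\gamma\delta_i^2)$ is again linear in~$v$, so the smoothness hypothesis is met, and the second line of~\eqref{EbcsdL} supplies the coupling that turns~$\delta_y^2$ into~$\gamma\delta_j^2$. Collecting the two reductions gives
\begin{equation*}
\partial_t U_{i,j}=\frac1{H^2}\big[\ell(\gamma\delta_i^2)+\ell(\gamma\delta_j^2)\big]U_{i,j}+g(U_{i,j})+\Ord{H^2}.
\end{equation*}
Setting full coupling $\gamma=1$ and recalling that $\frac1{H^2}\ell(\delta^2)=\partial_x^2+\Ord{H^2}$ in each direction, the right-hand side is $\nabla^2U_{i,j}+g(U_{i,j})+\Ord{H^2}$, the asserted consistency.

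The hard part will be the legitimacy of the second application of the lemma, where the quantity playing the part of the reaction is not a genuine pointwise nonlinearity but the difference operator $\frac1{H^2}\ell(\gamma\delta_i^2)$, which carries its own $H$ dependence. I would need to check that this term is $\Ord{1}$ on smooth macroscale fields, so it does not corrupt the $\Ord{H^2}$ bookkeeping, and that the time derivative and the $x$-grid differences commute with the $y$-direction operator algebra to the same order, so that the cross-direction error terms stay $\Ord{H^2}$ rather than leaking to leading order. Verifying that the two directional reductions are mutually consistent---in particular that freezing one coordinate while reducing the other introduces no lower-order coupling---is where the care lies; the remaining manipulations are exactly the operator algebra already rehearsed in Lemma~\ref{lem:1dc}.
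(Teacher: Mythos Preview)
Your approach is essentially identical to the paper's: apply Lemma~\ref{lem:1dc} first in~$x$ with $y$ treated as a parameter and $\partial_y^2u+g(u)$ playing the role of the reaction, then in~$y$ with the index~$i$ treated as a parameter and the already-discretised $x$-operator absorbed into the reaction. The paper's proof is terser and does not spell out the intermediate formulas or the bookkeeping concern you flag about the $H$-dependence of $\frac1{H^2}\ell(\gamma\delta_i^2)$ in the second step, but the logical structure is the same.
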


\begin{proof}
Apply the previous Lemma~\ref{lem:1dc} twice.
First, treat coordinate~$y$ as a parameter so that the reaction is $\hat g(u)=g(u)+\partial_y^2u$ and $\ell(\delta_x^2)=\partial_x^2$.
Then by Lemma~\ref{lem:1dc} the semi-discrete `grid' values $u_{i,j}(X_i,y,t)$, for discrete~$i$ and parametrised by continuous~$y$, evolve consistently with the reaction-diffusion \pde.
Second, treat index~$i$ as a parameter and consider the discrete modelling in coordinate~$y$ so that now the reaction~$g$ also involves operators acting on the $x$-grid and $\ell(\delta_y^2)=\partial_y^2$.
Then by Lemma~\ref{lem:1dc} the 2D grid values $U_{i,j}=u_{i,j}(X_i,Y_j,t)$ evolve consistently with the semi-discrete system generated in the first step, which in turn is consistent with the diffusion \pde.  
\end{proof}

The generalisation to higher spatial dimensions appears immediate.  

The following adaptation to the dynamics on patches is weaker, but the theorem is a starting result to support the efficient equation-free modelling on patches~\cite[e.g.]{Kevrekidis03b, Samaey03b}.
As centred differences generally scale like $\delta\propto H$ as macrogrid spacing $H\to0$, the theorem confirms simulation errors will be~\Ord{H^{2p-l}} where in the exponent~$l$ characterises the system being simulated ($l=2$ for a diffusion operator).

\begin{theorem}[isotropic patch consistency] \label{thm:ipc}
Consider the \pde\ $\partial_tu=\cL u$ for some local, isotropic, homogeneous, linear operator~$\cL$ in 2D.
Model the dynamics in patches of size $\pat h=rH$ centred on an equi-spaced grid $X_i=iH$ and $Y_j=jH$\,.
Let $u_{i,j}(x,y,t)$~denote the subgrid field in the $(i,j)$th~patch satisfying the \pde\ $\partial_tu_{i,j}=\cL u_{i,j}$ on the patch with the moderated interpatch coupling conditions~\eqref{eq:epcc}.
When interpatch interactions are truncated to residuals~$\Ord{\gamma^p}$ the grid values $\uij(t)=u_{i,j}(X_i,Y_j,t)$, at full coupling $\gamma=1$\,, evolve consistently with the \pde\ $\partial_tu=\cL u$ to errors~\Ord{\delta_i^{2p}+\delta_j^{2p}}.
\end{theorem}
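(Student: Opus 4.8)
The plan is to transplant the operator-algebra argument of Theorem~\ref{thm:1dc} and Lemma~\ref{lem:1dc} from overlapping elements to separated patches, the only genuinely new ingredient being the treatment of the $\gamma$-ameliorated shift operators~\eqref{eq:shifty} appearing in the patch coupling~\eqref{eq:epcc}. Because $\cL$ is local, isotropic and homogeneous, I first expand it formally in even centred differences; and because the patch coupling~\eqref{eq:epcc} factorises as a product $\shift_i^{\pm r}(\gamma)\shift_j^{\pm\eta}(\gamma)$ of one operator per coordinate, I would reduce the 2D problem to two successive 1D arguments exactly as Corollary~\ref{thm:2dc} reduces to Lemma~\ref{lem:1dc}: treat $y$ (equivalently the index~$j$) as a parameter to obtain a semi-discrete model in~$i$, then repeat in~$j$. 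The two passes are what contribute the two summands $\delta_i^{2p}$ and $\delta_j^{2p}$ of the claimed remainder.

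For the 1D step I exploit that a patch reaches only to $X_i\pm rH$, so the natural subgrid centred second difference spans the patch with step~$rH$. Forming the patch-spanning difference $\Delta_r^2 u_i:=u_i(X_i+rH)-2u_i(X_i)+u_i(X_i-rH)$ and invoking the coupling~\eqref{eq:epcc} together with the amplitude identity $U_i=u_i(X_i)$ turns this \emph{single} subgrid difference into the exact macroscale combination $[\shift_i^{r}(\gamma)+\shift_i^{-r}(\gamma)-2]U_i$. As in Theorem~\ref{thm:1dc}, the crux is to arrange that only one subgrid difference is ever required: writing $\cL=\ell(\partial_x^2)$ and using $\partial_x^2=\frac{1}{r^2H^2}\big[2\sinh^{-1}(\tfrac12\sqrt{\Delta_r^2})\big]^2$ (the patch analogue of the paper's $\delta_x$ expansion, with $H$ replaced by~$rH$), I invert the resulting function of $\partial_x^2$ onto the time-derivative side. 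Evaluating at $x=X_i$ then yields a closed grid evolution whose right-hand side is a function of the single operator $\shift_i^{r}(\gamma)+\shift_i^{-r}(\gamma)-2$ acting on~$U_i$.

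It then remains to set full coupling $\gamma=1$ and to account for the $\Ord{\gamma^p}$ truncation. At $\gamma=1$ the ameliorated shift collapses to the exact Lagrange shift, since $1\pm\mu\delta+\tfrac12\delta^2=\shift^{\pm1}$, so the patch-spanning operator becomes $\shift_i^{r}+\shift_i^{-r}-2=4\sinh^2(\tfrac12 rH\partial_x)$ and reverting the inversion reconstructs $\cL$ exactly; the untruncated model is therefore exactly consistent. Truncating the interpatch interactions to residuals $\Ord{\gamma^p}$ discards the terms of the binomial series~\eqref{eq:shifty} of order $\gamma^p$ and higher, and I would bound their contribution at $\gamma=1$ by tracking the lowest centred-difference order carried by each power of~$\gamma$, aiming at the stated remainder $\Ord{\delta_i^{2p}+\delta_j^{2p}}$ after the two coordinate passes.

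The main obstacle is precisely this last book-keeping. For overlapping elements the coupling~\eqref{eq:1dibc} gives the clean exact substitution $\delta_x^2u_i\mapsto\gamma\delta^2U_i$---one factor of~$\gamma$ per factor of~$\delta^2$---so powers of~$\gamma$ and of~$\delta$ stay locked together and the truncation error is read off immediately. The fractional power $[1+\gamma(\pm\mu\delta+\tfrac12\delta^2)]^r$ in~\eqref{eq:shifty} instead entangles the two expansions: a given power of~$\gamma$ spreads across several centred-difference orders, and conversely each centred-difference order receives contributions from several powers of~$\gamma$ that only reassemble correctly in the limit $\gamma=1$. Controlling this mixing---establishing that truncation at~$\gamma^p$, once evaluated at full coupling, still leaves a remainder of centred-difference order~$2p$---is the delicate part of the argument, and is the reason this patch statement is weaker and less sharp than its element counterpart.
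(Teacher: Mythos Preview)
Your core computation---forming the patch-spanning second difference and converting it via the coupling~\eqref{eq:epcc} into the macroscale combination $[\shift_i^r(\gamma)+\shift_i^{-r}(\gamma)-2]\uij$---is exactly what the paper does, and is the heart of the argument. Your observation that at $\gamma=1$ this collapses to $\shift_i^r+\shift_i^{-r}-2=\pat\delta_i^2$, so that the untruncated model is exactly consistent, is also right.

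Where your plan diverges from the paper is in the handling of the two spatial directions. You propose to reduce 2D to two successive 1D passes in the manner of Corollary~\ref{thm:2dc}. But that corollary works only because the operator there is $\nabla^2+g$, and the Laplacian splits additively as $\partial_x^2+\partial_y^2$: one can absorb $\partial_y^2u$ into an effective ``reaction'' for the $x$-pass. A general isotropic~$\cL$ (for example any nontrivial function of~$\nabla^2$) does not decompose this way, so treating one coordinate at a time does not fit the hypotheses of Lemma~\ref{lem:1dc}; the factorisation you cite is of the \emph{coupling}, not of the operator. The paper instead exploits isotropy directly: it writes $\cL=\ell_0+\ell(\pat\delta_x^2+\pat\delta_y^2)$ as a function of the single isotropic combination $\pat\delta_x^2+\pat\delta_y^2$, inverts~$\ell$ once to obtain $\ell^{-1}(\partial_t-\ell_0)u_{i,j}=(\pat\delta_x^2+\pat\delta_y^2)u_{i,j}$, and then evaluates both $\pat\delta_x^2$ and~$\pat\delta_y^2$ at the grid point simultaneously using your key identity in each direction. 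This delivers $\Ord{\delta_i^{2p}}$ and $\Ord{\delta_j^{2p}}$ in one stroke, with no coordinate-by-coordinate recursion.

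On the error bookkeeping that you flag as the main obstacle, the paper is far terser than you anticipate: having written the truncated $\gamma$-expansion of $\shift_i^{r}(\gamma)+\shift_i^{-r}(\gamma)-2$, it simply records the discarded remainder as $\Ord{\gamma^p\delta_i^{2p}}$ and then sets $\gamma=1$ to obtain $\Ord{\delta_i^{2p}}$. It does not unpack the mixing of $\gamma$- and $\delta$-orders in the fractional binomial that worries you; your concern is reasonable, but the paper does not attempt anything more refined than this assertion.
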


\begin{proof}
First let's establish how patch sized differences relate to grid value differences.
Second, this relates the \pde\ with the evolution of grid values.
Define the patch sized centred difference $\pat\delta_xu(x)=u(x+\pat h/2)-u(x-\pat h/2)$.
Then evaluated at the grid point~$(X_i,Y_j)$ the second centred difference
\begin{align*}
\left[\pat\delta_x^2u_{i,j}\right]_{(X_i,Y_j)}
&{}=\left[\{\shift _x^r-2+ \shift _x^{-r}\}u_{i,j}\right]_{(X_i,Y_j)}
\\&{}=\{\shift _i^r(\gamma)-2+ \shift _i^{-r}(\gamma)\}\uij
\quad\text{by \icc~\eqref{eq:epcc}, and then by~\eqref{eq:shifty}}
\\&{}=\left\{\left[1+\gamma(\mu_i\delta_i+\rat12\delta_i^2)\right]^r
-2+ \left[1+\gamma(-\mu_i\delta_i+\rat12\delta_i^2)\right]^r\right\}\uij
\\&{}=\left\{\left[1+\gamma(\mu_i\delta_i+\rat12\delta_i^2)\right]^r
-2+ \left[1+\gamma(-\mu_i\delta_i+\rat12\delta_i^2)\right]^r\right\}\uij
\\&\quad{}+\Ord{\gamma^p\delta_i^{2p}}
\quad\text{upon truncating to errors }\Ord{\gamma^p}
\\&{}=\left\{\left[1+\mu_i\delta_i+\rat12\delta_i^2\right]^r
-2+ \left[1-\mu_i\delta_i+\rat12\delta_i^2\right]^r\right\}\uij
\\&\quad{}+\Ord{\delta_i^{2p}}
\quad\text{upon evaluating at $\gamma=1$}
\\&{}=\left\{\shift _i^r-2+ \shift _i^{-r}\right\}\uij +\Ord{\delta_i^{2p}}
\\&{}=\pat\delta_i^2\uij +\Ord{\delta_i^{2p}}
\end{align*}
Similarly, the analogous relation holds in the other spatial direction: 
\begin{equation*}
\left[\pat\delta_y^2u_{i,j}\right]_{(X_i,Y_j)}=\pat\delta_j^2\uij +\Ord{\delta_j^{2p}}\,.
\end{equation*}

Now relate the \pde\ and the patch dynamics.
Because the linear operator~$\cL$ is isotropic we may formally write it in terms of the isotropic operator as $\cL=\ell_0+\ell(\pat \delta_x^2+\pat\delta_y^2)$.
Then inverting~$\ell$ the system of \pde{}s on patches, $\partial_tu_{i,j}=\cL u_{i,j}$\,, becomes $\ell^{-1}(\partial_t-\ell_0)u_{i,j}=(\pat \delta_x^2+\pat\delta_y^2)u_{i,j}$\,.
Evaluating at the grid point~$(X_i,Y_j)$ and using the above results leads to $\ell^{-1}(\partial_t-\ell_0)\uij=(\pat \delta_i^2+\pat\delta_j^2)\uij+\Ord{\delta_i^{2p}+\delta_j^{2p}}$.
Lastly, reverting~$\ell$ gives $\partial_t\uij=\big[\ell_0+\ell(\pat \delta_i^2+\pat\delta_j^2)\big]\uij+\Ord{\delta_i^{2p}+\delta_j^{2p}}
=\cL\uij+\Ord{\delta_i^{2p}+\delta_j^{2p}}$.
That is, the grid values evolve consistently with the system simulated within the patches.
\end{proof}

\section{Conclusion}

We explored novel macroscale discretisation of reaction-diffusion dynamics in two spatial dimensions.
This work generalises considerable earlier work on modelling dynamics in one dimension.
The specific coupling conditions~\eqref{EbcsdL} and~\eqref{eq:epcc} have important theoretical and practical consequences for inter-element and interpatch dynamics

Section~\ref{S_2D_divide} discussed how these coupling conditions ensure that centre manifold theory applies to prove the existence, emergence and approximation of the slow manifold that is the macroscale discretisation of general reaction-diffusion equations in two dimensions.
Further, Section~\ref{sec:nccec} proved that the resulting discrete models will also be consistent, as the macroscale grid size $H\to0$\,, with the continuum or microscale dynamics.
Thus the holistic discretisations generated in this novel approach have the dual justification of both consistency for small~$H$ and the existence, by centre manifold theory, of an exact slow manifold at finite~$H$.

This strong theoretical support appears to be straightforwardly generalisable to reaction-diffusion dynamics in three or more dimensions.
The support also appears to be straightforwardly generalisable to higher order \pde{}s, just as the theory supports the discrete modelling of one dimensional higher order \pde{}s such as the \KS\ \pde~\cite{MacKenzie00a, MacKenzie05a}.

Sections \ref{S_2D_low}~and~\ref{chapnumcm}, using the example of the real \glpde, explored technical issues necessary to apply the approach here and to more general \pde{}s.
In contrast to one spatial dimension, a purely algebraic approach can only carried out to a low order of accuracy.
Consequently we introduced and explored an approach where the microscale subgrid dynamics are described numerically, but with algebraic expressions for coefficients so that we construct an algebraic model for the macroscale discretisation.
This work provides a powerful approach and theory for the sound and accurate closure of macroscale simulations.

Section~\ref{sec:patch} showed that the adaptation of this approach to the gap-tooth method of Kevrekidis et al.~\cite{Kevrekidis03b, Samaey03b} is sound, even for very small patches of simulators.  Thus very efficient simulations are possible.  However, further research needs to extend this argument to systems where the microscale dynamics are not the spatially smooth dynamics of reaction-diffusion equations~\eqref{Erde}.

\paragraph{Acknowledgement}  The Australian Research Council Discovery Project grants DP0774311 and DP0988738 helped support this research.  We thank Yannis Kevrekidis for his encouragement and many stimulating discussions.


\end{document}